\newtheorem{theorem}{Theorem}[section]
\newtheorem{lemma}[theorem]{Lemma}
\theoremstyle{remark}
\theoremstyle{definition}
\theoremstyle{proposition}
\newtheorem{proposition}[theorem]{Proposition}
\numberwithin{equation}{section}
\begin{document}
\title{Conical structure for shrinking Ricci solitons}
\author{Ovidiu Munteanu}
\email{ovidiu.munteanu@uconn.edu}
\address{Department of Mathematics, University of Connecticut, Storrs, CT
06268, USA}
\author{Jiaping Wang}
\email{jiaping@math.umn.edu}
\address{School of Mathematics, University of Minnesota, Minneapolis, MN
55455, USA}
\date{}

\begin{abstract}
For a shrinking Ricci soliton with Ricci curvature convergent to zero at
infinity, it is shown that it must be asymptotically conical.
\end{abstract}

\maketitle

\section{Introduction}

The purpose of this paper is to show that a shrinking gradient
Ricci soliton must be smoothly asymptotic to a cone if its Ricci curvature 
goes to zero at infinity. Recall that a
gradient shrinking Ricci soliton is a Riemannian manifold $\left(
M^{n},g\right) $ for which there exists a potential function $f$ such that 
\begin{equation}
\mathrm{Ric}+\mathrm{Hess}\left( f\right) =\frac{1}{2}g,  \label{soliton_eq}
\end{equation}%
where $\mathrm{Ric}$ is the Ricci curvature of $M$ and $\mathrm{Hess}\left(
f\right) $ the Hessian of $f.$ Aside from its own interest as generalization
of Einstein manifolds, Ricci solitons are important in the study of the
Ricci flows. Indeed, one easily verifies (see \cite{CLN}) that $%
g(t)=(1-t)\,\phi _{t}^{\ast }\,g,$ for $-\infty <t<1,$ is a solution to the
Ricci flow

\begin{eqnarray*}
\frac{\partial }{\partial t}g(t) &=&-2\,\mathrm{Ric}(g(t)) \\
g(0) &=&g
\end{eqnarray*}%
for a suitably chosen family of diffeomorphisms $\phi _{t}$ on $M$ with $%
\phi _{0}=id.$ So shrinking Ricci solitons may be regarded as self-similar
solutions to the Ricci flows. It has been shown in \cite{EMT} that the
blow-ups around a type-I singularity point always converge to nontrivial
gradient shrinking Ricci solitons. Therefore, it would be very desirable to
understand and even classify shrinking Ricci solitons.

In the case dimension $n=2,$ according to \cite{H}, the only examples are
either the sphere $\mathbb{S}^2$ or the Gaussian soliton, Euclidean space $%
\mathbb{R}^2$ together with potential function $f(x)=\frac{1}{4}\,|x|^2.$
For dimension $n=3, $ improving upon the breakthrough of Perelman \cite{P},
Naber \cite{N}, Ni and Wallach \cite{NW}, and Cao, Chen and Zhu \cite{CCZ}
have concluded that a three dimensional shrinking gradient Ricci soliton
must be a quotient of the sphere $\mathbb{S}^3,$ or $\mathbb{R}^3,$ or $%
\mathbb{S}^2\times \mathbb{R}.$

For high dimensional shrinking Ricci solitons, examples other than the
sphere and Gaussian soliton (and their products) have been constructed by 
\cite{Ca, K, WZ, FIK, DW}. This certainly indicates that it would be more
complicated, if at all possible, to obtain a complete classification. Under
some auxiliary conditions on the full curvature tensor, partial
classification results have been established. In \cite{N}, Naber has shown
that a four dimensional complete shrinking Ricci soliton of bounded
nonnegative curvature operator must be a quotient of $\mathbb{R}^{k}\times 
\mathbb{S}^{4-k}$ with $k=0,1,2.$ A theorem of B\"{o}hm and Wilking \cite{BW}
implies that a compact shrinking Ricci soliton of any dimension with
positive curvature operator must be a spherical space form. Also, shrinking
gradient Ricci solitons of vanishing Weyl tensor have been classified and must
be finite quotients of sphere $\mathbb{S}^{n},$ or $\mathbb{R}^n,$ or
$\mathbb{S}^{n-1}\times \mathbb{R}$ (see \cite{Z, ELM, PW, CWZ, MS}). More generally,
in a recent work \cite{CC}, Cao and Chen have shown that a Bach-flat gradient shrinking
Ricci soliton is either Einstein, or a finite quotient of the Gaussian shrinking soliton $\mathbb{R}^n,$
or a finite quotient of $N^{n-1}\times \mathbb{R},$ where $N^{n-1}$ is an Einstein manifold of positive
scalar curvature. We refer the reader to the two surveys \cite{Ca1,
Ca2} for more results and details.

In another direction, Kotschwar and Wang \cite{KW} have recently shown that
two shrinking Ricci solitons $C^2$ asymptotic to the same cone must be
isometric. Here, by a cone, we mean a manifold $[0,\infty )\times \Sigma $
endowed with Riemannian metric $g_{c}=dr^{2}+r^{2}\,g_{\Sigma },$ where $%
(\Sigma ,g_{\Sigma })$ is a closed $(n-1)$-dimensional Riemannian manifold.
Denote $E_{R}=(R,\infty )\times \Sigma $ for $R\geq 0$ and define the
dilation by $\lambda $ to be the map $\rho _{\lambda }:E_{0}\rightarrow
E_{0} $ given by $\rho _{\lambda }(r,\sigma )=(\lambda \,r,\sigma ).$ A
Riemannian manifold $(M,g)$ is said to be $C^{k}$ asymptotic to the cone $%
(E_{0},g_{c})$ if, for some $R>0,$ there is a diffeomorphism $\Phi
:E_{R}\rightarrow M\setminus \Omega $ such that $\lambda ^{-2}\,\rho
_{\lambda }^{\ast }\,\Phi ^{\ast }\,g\rightarrow g_{c}$ as $\lambda
\rightarrow \infty $ in $C_{loc}^{k}(E_{0},g_{c}),$ where $\Omega $ is a
compact subset of $M.$

In view of their result, it becomes an interesting question to determine
when a shrinking Ricci soliton is asymptotically conical. In our recent work 
\cite{MW}, we have shown that this is the case for four dimensional
shrinking gradient Ricci solitons with scalar curvature converging to $0$ at
infinity. This result depends on the fact that the full curvature tensor 
$\mathrm{Rm}$ of a four dimensional soliton is controlled by its scalar curvature $S$
alone, that is, $\left\vert \mathrm{Rm}\right\vert \leq c\,\left\vert 
\mathrm{Ric}\right\vert \leq c\,S.$ While it remains to be seen whether such an
estimate is true for high dimensional shrinking Ricci solitons, by imposing
assumption on the Ricci curvature instead, we manage to obtain a parallel
result as well.

\begin{theorem}
\label{decay}Let $\left( M,g,f\right) $ be a gradient shrinking Ricci
soliton of dimension $n$ with Ricci curvature convergent to zero at
infinity. Then $\left( M,g,f\right) $ is $C^k$ asymptotic to a cone for all 
$k.$
\end{theorem}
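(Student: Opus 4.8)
The plan is to use the potential function itself as an asymptotic radial coordinate, to establish a definite decay rate for the full curvature tensor along the end, to read off the second fundamental form of the level sets of $f$ from the soliton equation, and finally to upgrade the resulting geometric convergence to $C^k$ convergence by exploiting the self-similar Ricci flow generated by the soliton. First I would fix the radial coordinate. Normalizing Hamilton's identity so that $S+|\nabla f|^2=f$ and invoking the quadratic growth of the potential, the function $\rho:=2\sqrt f$ is comparable to the distance $r$ to a fixed base point and satisfies $|\nabla\rho|^2=1-S/f$. The hypothesis $|\mathrm{Ric}|\to 0$ forces $S\to 0$, hence $|\nabla\rho|\to 1$, so $\rho$ is an asymptotically distance-like exhaustion whose regular level sets $\Sigma_t=\{\rho=t\}$ foliate the end. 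The candidate cross-section is $(\Sigma,g_\Sigma)=\lim_{t\to\infty}(\Sigma_t,t^{-2}g|_{\Sigma_t})$, and the aim is to show that this limit exists smoothly and that $\lambda^{-2}\rho_\lambda^*\Phi^*g\to dr^2+r^2 g_\Sigma$ in every $C^k_{loc}$.

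The heart of the argument, and the step I expect to be the main obstacle, is promoting the hypothesis $|\mathrm{Ric}|\to 0$ to a quantitative decay of the \emph{entire} curvature tensor, ideally $|\mathrm{Rm}|=O(\rho^{-2})$ together with $|\nabla^k\mathrm{Rm}|=O(\rho^{-2-k})$. Writing $\Delta_f=\Delta-\langle\nabla f,\nabla\,\cdot\,\rangle$ for the drift Laplacian, the soliton structure furnishes the elliptic identities $\Delta_f S=S-2|\mathrm{Ric}|^2$, $\Delta_f\mathrm{Ric}=\mathrm{Ric}-2\,\mathrm{Rm}*\mathrm{Ric}$ and, schematically, $\Delta_f\mathrm{Rm}=\mathrm{Rm}+\mathrm{Rm}*\mathrm{Rm}$. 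The genuine difficulty is that in dimension $n\ge 5$ the Weyl tensor is not pointwise dominated by Ricci, so the full tensor cannot simply be bounded by the hypothesis; I would instead control it analytically, combining the fact that $\mathrm{div}(\mathrm{Weyl})$ is governed by $\nabla\mathrm{Ric}$ through the second Bianchi identity with weighted integral estimates of the form $\int_M|\mathrm{Rm}|^2e^{-f}<\infty$. A Moser iteration adapted to $\Delta_f$ on annuli $\{t\le\rho\le 2t\}$ then converts $L^2$ smallness into pointwise smallness, while the favorable zeroth-order sign in $\Delta_f S=S-2|\mathrm{Ric}|^2$ feeds the radial asymptotics of the drift operator, whose leading term at infinity is $-\tfrac{r}{2}\,\partial_r$, to produce honest polynomial decay; bootstrapping through the Ricci and full-curvature equations then yields the desired rate.

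Granting $|\mathrm{Ric}|=O(\rho^{-2})$, the conical structure is read off from the soliton equation $\mathrm{Hess}(f)=\tfrac12 g-\mathrm{Ric}$. Since $\nabla\rho=f^{-1/2}\nabla f$, one computes
\[
\mathrm{Hess}(\rho)=\frac{1}{\rho}\,g-\frac{2}{\rho}\,\mathrm{Ric}-\frac{1}{2}\,f^{-3/2}\,df\otimes df ,
\]
and restricting to $T\Sigma_t$ kills the radial last term, so $\mathrm{Hess}(\rho)|_{T\Sigma_t}=\frac1\rho\,g|_{T\Sigma_t}+O(\rho^{-3})$. Dividing by $|\nabla\rho|\to 1$ shows the second fundamental form of $\Sigma_t$ is $\mathrm{II}=\frac1\rho\,g|_{T\Sigma_t}+O(\rho^{-3})$, exactly the asymptotics of the distance spheres of a cone. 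Flowing the level sets along $\nabla\rho/|\nabla\rho|^2$, the induced metric obeys $\partial_t\big(g|_{T\Sigma_t}\big)=\tfrac{2}{t}\,g|_{T\Sigma_t}+O(t^{-3})$, so $t^{-2}g|_{T\Sigma_t}$ is Cauchy in $t$ and converges to a limit metric $g_\Sigma$. The coordinates built from this flow are orthogonal, since the flow lines meet the level sets perpendicularly, so $g=|\nabla\rho|^{-2}\,dt^2+g|_{T\Sigma_t}$ with $|\nabla\rho|^{-2}\to 1$, delivering the warped form $dr^2+r^2 g_\Sigma$ to leading order.

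Finally, to reach $C^k$ for every $k$, I would exploit the self-similar Ricci flow $g(t)=(1-t)\,\phi_t^*g$: the rescalings $g_\lambda=\lambda^{-2}\rho_\lambda^*\Phi^*g$ are, up to the diffeomorphisms $\phi_t$, its time-slices, so Shi's local derivative estimates applied to this ancient solution, equivalently differentiating the drift-elliptic curvature system and iterating the weighted estimates, turn $|\mathrm{Rm}|=O(\rho^{-2})$ into the full family of bounds $|\nabla^k\mathrm{Rm}|=O(\rho^{-2-k})$. These give uniform $C^k$ control of the $g_\lambda$ on $E_0$; Arzel\`a--Ascoli yields $C^k_{loc}$ subconvergence, and the second-fundamental-form computation identifies every subsequential limit with the single cone $dr^2+r^2 g_\Sigma$, forcing convergence of the entire family and completing the proof.
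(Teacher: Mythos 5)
Your overall skeleton matches the paper's: establish $\lvert\mathrm{Rm}\rvert=O(f^{-1})$, upgrade to $\lvert\nabla^k\mathrm{Rm}\rvert=O(r^{-2-k})$ via Shi's estimates, and conclude conicality from the level-set structure of $f$ (the last step the paper simply delegates to Kotschwar--Wang). Your treatment of the quadratic-rate upgrade is also in the right spirit: the paper likewise uses $\Delta_f\lvert\mathrm{Rm}\rvert\geq\lvert\mathrm{Rm}\rvert-c\lvert\mathrm{Rm}\rvert^2$ together with $\lvert\mathrm{Rm}\rvert\to 0$ and a maximum principle to get $\lvert\mathrm{Rm}\rvert\leq c/f$.

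However, there is a genuine gap exactly at the step you yourself flag as the main obstacle: passing from $\lvert\mathrm{Ric}\rvert\to 0$ to pointwise smallness of the full curvature. Your proposed mechanism --- a weighted $L^2$ bound of the form $\int_M\lvert\mathrm{Rm}\rvert^2 e^{-f}<\infty$ fed into a Moser iteration on the annuli $\{t\leq\rho\leq 2t\}$ --- cannot work as stated, because the Gaussian weight is far too weak: on such an annulus $e^{-f}\sim e^{-t^2/4}$, so the bound only yields $\int_{\{t\leq\rho\leq 2t\}}\lvert\mathrm{Rm}\rvert^2\leq Ce^{t^2/4}$, which carries no information at infinity. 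What the paper actually proves, and what is the real content of the argument, is a weighted $L^p$ estimate with a \emph{polynomially growing} weight, $\int_M\lvert\mathrm{Rm}\rvert^p f^{p/4}\leq C$ for a large $p$ depending only on $n$. Obtaining this requires a delicate two-step absorption: an integral inequality (Lemma \ref{int}) bounding $\int\lvert\mathrm{Rm}\rvert^p f^a\phi^q$ by $\frac{c}{p}\int\lvert\mathrm{Rm}\rvert^{p-1+\alpha}f^a\phi^q+\frac{c}{p}\int\lvert\mathrm{Rm}\rvert^{p+\alpha}f^a\phi^q+c(p)\int\lvert\mathrm{Ric}\rvert^p f^a\phi^q$ for $\alpha\in\{0,1\}$, whose two instances are played against each other to absorb all $\mathrm{Rm}$ terms (using that the coefficient $c/p$ is small for $p$ large, which is where the convexity $\mathrm{Hess}(f)\geq\frac13 g$ coming from $\lvert\mathrm{Ric}\rvert$ small enters); and then a separate argument (Proposition \ref{Ric}) showing $\int\lvert\mathrm{Ric}\rvert^p f^a<\infty$, which again uses the smallness of $\lvert\mathrm{Ric}\rvert$ to absorb the term $\int\lvert\mathrm{Rm}\rvert\,\lvert\mathrm{Ric}\rvert^p f^a\phi^q$ back into the left side via Young's inequality and the already-established comparison between the $\mathrm{Rm}$ and $\mathrm{Ric}$ integrals. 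Only after this does Moser iteration (on unit balls, with Sobolev constant controlled by Perelman's invariant) give $\lvert\mathrm{Rm}\rvert\to 0$. Your sketch replaces all of this with an appeal to the second Bianchi identity and an $e^{-f}$-weighted bound, so the central analytic difficulty of the theorem remains unaddressed.
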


Consequently, the classification problem for such solitons is reduced to the
one for cones.

Essential to the proof of Theorem \ref{decay} is a quadratic decay estimate
for the Riemann curvature $\left\vert \mathrm{Rm}\right\vert .$ Once this is
available, together with Shi's \cite{S} derivative estimates of $\mathrm{Rm},$ 
it is then straightforward to conclude that $\left( M,g\right) $ is
asymptotically conical \cite{KW}. As demonstrated in \cite{MW}, such a decay
estimate follows from a maximum principle argument provided that the Riemann
curvature tensor $\mathrm{Rm}$ converges to zero at infinity. So the heart
of the proof is to conclude from $\mathrm{Ric}$ converging to $0$ that $%
\mathrm{Rm}$ goes to $0$ as well. Here, we are very much inspired by the
work of \cite{MWa}, where it says that for a shrinking Ricci soliton, its
Riemann curvature is at most of polynomial growth if its Ricci curvature is
bounded. However, we would like to point out that our argument differs
significantly from \cite{MWa} in terms of technical details.

We remark that our argument only requires the Ricci curvature being sufficiently
small outside a compact set. More precisely, Theorem \ref{decay} continues to hold
if one assumes instead that $\vert \mathrm{Ric}\vert \leq \delta$ near the infinity of $M$
for some positive constant $\delta$ depending only on the dimension $n.$ 

\section{Curvature estimates}

In this section, we prove Theorem \ref{decay}. We continue to denote by $%
(M,g,f)$ an $n$-dimensional shrinking Ricci soliton with potential function $%
f.$

Let us recall the following important identities 
\begin{eqnarray}
\nabla _{k}R_{jk} &=&R_{jk}f_{k}=\frac{1}{2}\nabla _{j}S  \label{id} \\
\nabla _{l}R_{ijkl} &=&R_{ijkl}f_{l}=\nabla _{j}R_{ki}-\nabla _{i}R_{kj}. 
\notag
\end{eqnarray}%
As observed in \cite{H}, this implies $S+|\nabla f|^{2}=f$ by adding a
suitable constant to $f.$ Since $S\geq 0$ by \cite{C}, we have $|\nabla
f|^{2}\leq f.$

Also, denoting $\Delta _{f}=\Delta -\left\langle \nabla f,\nabla
\right\rangle ,$ we have 

\begin{eqnarray}
\Delta _{f}R_{ij} &=&R_{ij}-2R_{ikjl}R_{kl}  \label{id2} \\
\Delta _{f}\mathrm{Rm} &=&\mathrm{Rm}+\mathrm{Rm}\ast \mathrm{Rm}  \notag
\end{eqnarray}

Let us denote 
\begin{equation*}
D\left( r\right) :=\left\{ x\in M:\;f\left( x\right) \leq r\right\} .
\end{equation*}%
Notice that $D\left( r\right) $ is always compact as by \cite{CZ} there
exists constant $c$ such that

\begin{equation}
\frac{1}{4}r^{2}\left( x\right) -c\,r(x)\leq f\left( x\right) \leq \frac{1}{4%
}r^{2}\left( x\right) +c\,r(x)\;\;\text{for}\;\;r\left( x\right) \geq 1.
\label{f}
\end{equation}%
Here $r\left( x\right) $ is the distance from $x$ to a fixed point $x_{0}\in
M.$ Also, recall ( see \cite{CZ}) that the volume $V(r)$ of $D(r)$ satisfies 
\begin{equation}
V(r)\leq c\,r^{\frac{n}{2}}.  \label{V}
\end{equation}

We define the cut-off $\phi $ with support in $D\left( r\right) $ by

\begin{equation*}
\phi \left( x\right) =\left\{ 
\begin{array}{ccc}
\frac{1}{r}\left( r-f\left( x\right) \right) & \text{if} & x\in D\left(
r\right) \\ 
0 & \text{if} & x\in M\backslash D\left( r\right)%
\end{array}%
\right.
\end{equation*}%
Let us choose $r_{0}>0$ large enough so that  $f\geq 1$ and 
\begin{equation}
\left\vert \mathrm{Ric}\right\vert \leq \frac{1}{p^{5}} \text{ \ on }%
M\backslash D\left( r_{0}\right) .  \label{a2}
\end{equation}%
In particular, since 
\begin{equation*}
\mathrm{Ric}+\mathrm{Hess}\left( f\right) =\frac{1}{2}g,
\end{equation*}%
we have

\begin{equation}
\mathrm{Hess}(f)\geq \frac{1}{3}\,g \text{ \ on } M\backslash D\left( r_{0}\right). \label{a2s}
\end{equation}%
We fix $p\geq 8\,n$ large enough depending only on dimension $n$ and let $q$
and $a$ be constants satisfying

\begin{equation}
q\geq 2p+3\text{ and }a\leq \frac{1}{4}p.  \label{const}
\end{equation}%
Throughout the paper, unless otherwise indicated, we will use $C$ to denote constants that may depend on
the geometry of $D\left( r_{0}\right) ,$ $c$ constants depending only on
dimension $n$ but independent of $p$, and $c\left( p\right) $ constants
depending on $p$. These constants may change from line to line. We first
prove the following lemma.

\begin{lemma}
\label{int} Let $\left( M,g,f\right) $ be a gradient shrinking Ricci soliton
of dimension $n$ with $\lim_{x\to \infty} \left\vert \mathrm{Ric}\right\vert (x)=0.$ 
Then there exist positive constants $c$, $C$ and $c\left( p\right) $
such that for $\alpha \in \left\{ 0,1\right\} ,$

\begin{eqnarray*}
\int_{M}\left\vert \mathrm{Rm}\right\vert ^{p}f^{a}\phi ^{q} &\leq &\frac{c}{%
p}\int_{M}\left\vert \mathrm{Rm}\right\vert ^{p-1+\alpha }f^{a}\phi ^{q}+%
\frac{c}{p}\int_{M}\left\vert \mathrm{Rm}\right\vert ^{p+\alpha }f^{a}\phi
^{q} \\
&&+c\left( p\right) \int_{M}\left\vert \mathrm{Ric}\right\vert ^{p}f^{a}\phi
^{q}+C.
\end{eqnarray*}
\end{lemma}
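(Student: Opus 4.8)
The plan is to use the special soliton structure to remove the cubic reaction term $\mathrm{Rm}\ast\mathrm{Rm}$, which is the only genuine obstruction to an estimate with the small coefficient $c/p$. Naively, contracting $\Delta _f\mathrm{Rm}=\mathrm{Rm}+\mathrm{Rm}\ast\mathrm{Rm}$ with $\mathrm{Rm}\,|\mathrm{Rm}|^{p-2}f^a\phi^q$ produces a term $\int |\mathrm{Rm}|^{p-2}\langle \mathrm{Rm}\ast\mathrm{Rm},\mathrm{Rm}\rangle f^a\phi^q$ of size $c\int|\mathrm{Rm}|^{p+1}f^a\phi^q$, with the wrong coefficient $c$ rather than $c/p$. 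So first I would combine the contracted second Bianchi identity $\nabla_lR_{ijkl}=\nabla_jR_{ki}-\nabla_iR_{kj}$, the soliton identity $R_{ijkl}f_l=\nabla_lR_{ijkl}$, and $\mathrm{Hess}(f)=\tfrac12 g-\mathrm{Ric}$ to derive the first-order identity
\[
\nabla_{\nabla f}\mathrm{Rm}=-\mathrm{Rm}+\mathrm{Ric}\ast\mathrm{Rm}+\nabla^2\mathrm{Ric},
\]
(schematically, the last term being a contraction of second covariant derivatives of $\mathrm{Ric}$). Contracting with $\mathrm{Rm}$ yields
\[
|\mathrm{Rm}|^2=-\tfrac12\langle\nabla f,\nabla|\mathrm{Rm}|^2\rangle+\mathrm{Ric}\ast\mathrm{Rm}\ast\mathrm{Rm}+\mathrm{Rm}\ast\nabla^2\mathrm{Ric}.
\]
The cubic term has disappeared: comparing with $\Delta_f\mathrm{Rm}=\mathrm{Rm}+\mathrm{Rm}\ast\mathrm{Rm}$, the reaction term cancels against $\Delta\mathrm{Rm}$, leaving only the drift term and curvature paired with derivatives of $\mathrm{Ric}$. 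This cancellation is the heart of the argument.

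Next I would multiply by $|\mathrm{Rm}|^{p-2}f^a\phi^q$ and integrate, organizing the right-hand side into a drift term, a $\mathrm{Ric}\ast\mathrm{Rm}\ast\mathrm{Rm}$ term, and an $\mathrm{Rm}\ast\nabla^2\mathrm{Ric}$ term. For the drift term, writing $|\mathrm{Rm}|^{p-2}\nabla|\mathrm{Rm}|^2=\tfrac2p\nabla|\mathrm{Rm}|^p$ and integrating by parts gives
\[
-\tfrac12\int|\mathrm{Rm}|^{p-2}\langle\nabla f,\nabla|\mathrm{Rm}|^2\rangle f^a\phi^q=\tfrac1p\int|\mathrm{Rm}|^p\,\mathrm{div}\!\left(f^a\phi^q\nabla f\right),
\]
which is exactly where the factor $1/p$ is produced. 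Expanding the divergence with $\Delta f=\tfrac n2-S$, $|\nabla f|^2=f-S$, and $\nabla\phi=-\tfrac1r\nabla f$, the coefficient $(a+q+\tfrac n2)$ of $\int|\mathrm{Rm}|^pf^a\phi^q$ looks dangerously large; the key is that the defining relation $f=r(1-\phi)$ turns the $\phi$-derivative contribution into $-q\int|\mathrm{Rm}|^pf^a\phi^{q-1}$, and since $\phi^{q-1}\ge\phi^q$ this cancels the $+q$ in the coefficient, leaving $\tfrac1p(a+\tfrac n2)\le\tfrac14+\tfrac cp$, absorbable into the left-hand side. The $S$-corrections are small because $S\le\sqrt n\,|\mathrm{Ric}|$.

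The term $\int(\mathrm{Ric}\ast\mathrm{Rm}\ast\mathrm{Rm})|\mathrm{Rm}|^{p-2}f^a\phi^q$ is bounded by $c\int|\mathrm{Ric}|\,|\mathrm{Rm}|^pf^a\phi^q\le\tfrac c{p^5}\int|\mathrm{Rm}|^pf^a\phi^q$ directly from the smallness hypothesis \eqref{a2}. The substantive work is the last term $\int(\mathrm{Rm}\ast\nabla^2\mathrm{Ric})|\mathrm{Rm}|^{p-2}f^a\phi^q$. I would integrate by parts to shed one derivative from $\nabla^2\mathrm{Ric}$, and whenever $\nabla|\mathrm{Rm}|$ appears I would use $|\mathrm{Rm}|^{p-k}\nabla|\mathrm{Rm}|=\tfrac1{p-k+1}\nabla|\mathrm{Rm}|^{p-k+1}$ and integrate by parts again, so that each step contributes a factor $1/p$ and pushes the derivative onto the curvature factor — where $\operatorname{div}\mathrm{Rm}=\nabla\mathrm{Ric}$ by Bianchi — and onto the weight, rather than onto $|\mathrm{Rm}|$. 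This avoids the need for a sign-definite gradient reservoir, which the cancellation of the cubic term has deprived us of. The resulting derivatives of $\mathrm{Ric}$ are reduced to zeroth order through the equation $\Delta_f\mathrm{Ric}=\mathrm{Ric}-2\mathrm{Rm}\ast\mathrm{Ric}$ and Young's inequality.

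The two admissible exponents $\alpha\in\{0,1\}$ arise from the two natural ways of estimating the mixed terms of the form $|\mathrm{Rm}|^{p-1}\cdot|\mathrm{Ric}|$: either using $|\mathrm{Ric}|\le p^{-5}$ to drop a power of $|\mathrm{Rm}|$ (giving $|\mathrm{Rm}|^{p-1}$, the case $\alpha=0$) or applying Young's inequality to retain it (giving $|\mathrm{Rm}|^{p}$ and $|\mathrm{Rm}|^{p+1}$, the case $\alpha=1$); the same dichotomy yields $c(p)\int|\mathrm{Ric}|^pf^a\phi^q$, while the constant $C$ collects the contributions from the compact region $D(r_0)$ where the Ricci smallness is unavailable. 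I expect the delicate bookkeeping of this final term to be the main obstacle: one must keep every curvature coefficient of order $c/p$ and track the powers $\phi^{q},\phi^{q-1},\phi^{q-2}$ produced when derivatives hit the cutoff. It is here that the constraints $q\ge 2p+3$ and $a\le\tfrac14 p$ are used precisely — the former to control the lower powers of $\phi$, the latter to guarantee the $\tfrac14$-absorption in the drift step — so that the whole scheme closes.
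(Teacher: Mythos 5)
Your first-order identity $\nabla _{\nabla f}\mathrm{Rm}=-\mathrm{Rm}+\mathrm{Ric}\ast \mathrm{Rm}+\nabla ^{2}\mathrm{Ric}$ is correct, and the way you extract the factor $1/p$ from the drift term is sound; but this part is not really a different route. It is a pointwise repackaging of what the paper does in (\ref{a1})--(\ref{a3}), namely writing $\langle \nabla \left\vert \mathrm{Rm}\right\vert ^{p},\nabla f\rangle =2p\,f_{h}(\nabla _{l}R_{ijkh})R_{ijkl}\left\vert \mathrm{Rm}\right\vert ^{p-2}$ by the second Bianchi identity and integrating by parts so that the Hessian $f_{hl}=\frac{1}{2}g_{hl}-R_{hl}$ produces the coercive term $-\frac{2p}{3}\int \left\vert \mathrm{Rm}\right\vert ^{p}f^{a}\phi ^{q}$. (Your concern about the coefficient $q$ is moot: the cutoff contribution is $-\frac{q}{r}\phi ^{q-1}\left\vert \nabla f\right\vert ^{2}f^{a}\leq 0$ and is simply discarded.)

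The genuine gap is in your treatment of $\int (\mathrm{Rm}\ast \nabla ^{2}\mathrm{Ric})\left\vert \mathrm{Rm}\right\vert ^{p-2}f^{a}\phi ^{q}$. After one integration by parts you face $S_{2}=-\int (\mathrm{Rm}\ast \nabla \mathrm{Ric})\ast \nabla \left\vert \mathrm{Rm}\right\vert ^{p-2}\,f^{a}\phi ^{q}$, and your prescription is to view $\nabla \left\vert \mathrm{Rm}\right\vert ^{p-2}$ as an exact gradient and integrate by parts again. But that second integration by parts merely differentiates the factor $\mathrm{Rm}\ast \nabla \mathrm{Ric}\,f^{a}\phi ^{q}$ and returns the original $\nabla ^{2}\mathrm{Ric}$ term with the same coefficient, together with the divergence and weight terms you already had: the identity you obtain is $T=T$, and no factor of $1/p$ is generated. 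One cannot avoid estimating $S_{2}$ by Cauchy--Schwarz, which is exactly how the cross term $cp^{2}\int \left\vert \nabla \mathrm{Ric}\right\vert \left\vert \nabla \mathrm{Rm}\right\vert \left\vert \mathrm{Rm}\right\vert ^{p-2}f^{a}\phi ^{q}$ enters (\ref{a5}) and splits into $\frac{c}{p}\int \left\vert \nabla \mathrm{Rm}\right\vert ^{2}\left\vert \mathrm{Rm}\right\vert ^{p-3+\alpha }f^{a}\phi ^{q}$ plus $cp^{3}\int \left\vert \nabla \mathrm{Ric}\right\vert ^{2}\left\vert \mathrm{Rm}\right\vert ^{p-1-\alpha }f^{a}\phi ^{q}$ --- precisely the ``gradient reservoir'' you claim to have dispensed with. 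Controlling those two integrals is where the bulk of the proof lives: the first via $2\left\vert \nabla \mathrm{Rm}\right\vert ^{2}\leq \Delta _{f}\left\vert \mathrm{Rm}\right\vert ^{2}+c\left\vert \mathrm{Rm}\right\vert ^{3}$, whose cubic reaction term is the true source of the $\frac{c}{p}\int \left\vert \mathrm{Rm}\right\vert ^{p+\alpha }$ in the conclusion (your attribution of that term to Young's inequality applied to $\left\vert \mathrm{Rm}\right\vert ^{p-1}\left\vert \mathrm{Ric}\right\vert $ cannot be right, since Young there yields $\left\vert \mathrm{Rm}\right\vert ^{p}$, never $\left\vert \mathrm{Rm}\right\vert ^{p+1}$); the second via the Bochner argument (\ref{a6})--(\ref{a19}), which itself requires the auxiliary estimate (\ref{a11})--(\ref{a12}) for $\int \left\vert \nabla \mathrm{Ric}\right\vert ^{2}\left\vert \mathrm{Ric}\right\vert ^{p-2}f^{a}\phi ^{q}$ that your sketch omits. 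The $\alpha $-dichotomy likewise originates in the two ways of splitting $\left\vert \mathrm{Rm}\right\vert ^{p-2}$ inside that Cauchy--Schwarz step, not where you place it. As written, the proposal does not close.
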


\begin{proof}
Integrating by parts and using that $\Delta f\leq \frac{n}{2},$ we get%
\begin{eqnarray*}
-\frac{n}{2}\int_{M}\left\vert \mathrm{Rm}\right\vert ^{p}f^{a}\phi ^{q}
&\leq &-\int_{M}\left\vert \mathrm{Rm}\right\vert ^{p}\left( \Delta f\right)
f^{a}\phi ^{q} \\
&=&\int_{M}\left\langle \nabla \left\vert \mathrm{Rm}\right\vert ^{p},\nabla
f\right\rangle f^{a}\phi ^{q} \\
&&+a\int_{M}\left\vert \mathrm{Rm}\right\vert ^{p}\left\vert \nabla
f\right\vert ^{2}f^{a-1}\phi ^{q} \\
&&+\int_{M}\left\vert \mathrm{Rm}\right\vert ^{p}f^{a}\left\langle \nabla
f,\nabla \phi ^{q}\right\rangle  \\
&\leq &\int_{M}\left\langle \nabla \left\vert \mathrm{Rm}\right\vert
^{p},\nabla f\right\rangle f^{a}\phi ^{q}+a\int_{M}\left\vert \mathrm{Rm}%
\right\vert ^{p}f^{a}\phi ^{q},
\end{eqnarray*}%
where in the last line we have used that $\left\langle \nabla f,\nabla \phi
^{q}\right\rangle \leq 0$. Therefore, by Bianchi identities we obtain that 
\begin{eqnarray*}
-\left( a+\frac{n}{2}\right) \int_{M}\left\vert \mathrm{Rm}\right\vert
^{p}f^{a}\phi ^{q} &\leq &\int_{M}\left\langle \nabla \left\vert \mathrm{Rm}%
\right\vert ^{p},\nabla f\right\rangle \;f^{a}\phi ^{q} \\
&=&p\int_{M}f_{h}\left( \nabla _{h}R_{ijkl}\right) R_{ijkl}\left\vert 
\mathrm{Rm}\right\vert ^{p-2}f^{a}\phi ^{q} \\
&=&2p\int_{M}f_{h}\left( \nabla _{l}R_{ijkh}\right) R_{ijkl}\left\vert 
\mathrm{Rm}\right\vert ^{p-2}f^{a}\phi ^{q}.
\end{eqnarray*}%
It follows through integration by parts that 
\begin{eqnarray}
&&-\left( a+\frac{n}{2}\right) \int_{M}\left\vert \mathrm{Rm}\right\vert
^{p}f^{a}\phi ^{q}  \label{a1} \\
&\leq &-2p\int_{M}R_{ijkh}f_{hl}R_{ijkl}\left\vert \mathrm{Rm}\right\vert
^{p-2}f^{a}\phi ^{q}  \notag \\
&&-2p\int_{M}R_{ijkh}f_{h}\left( \nabla _{l}R_{ijkl}\right) \left\vert 
\mathrm{Rm}\right\vert ^{p-2}f^{a}\phi ^{q}  \notag \\
&&-2p\int_{M}R_{ijkh}f_{h}R_{ijkl}\left( \nabla _{l}\left\vert \mathrm{Rm}%
\right\vert ^{p-2}\right) f^{a}\phi ^{q}  \notag \\
&&-2ap\int_{M}R_{ijkh}f_{h}R_{ijkl}f_{l}\left\vert \mathrm{Rm}\right\vert
^{p-2}f^{a-1}\phi ^{q}  \notag \\
&&+\frac{2pq}{r}\int_{M}R_{ijkh}f_{h}R_{ijkl}f_{l}\left\vert \mathrm{Rm}%
\right\vert ^{p-2}f^{a}\phi ^{q-1}.  \notag
\end{eqnarray}%
Note that on $M\backslash D\left( r_{0}\right),$ by (\ref{a2s}),
 
\begin{equation*}
-R_{ijkh}f_{hl}R_{ijkl}\leq -\frac{1}{3}\left\vert \mathrm{Rm}\right\vert
^{2}.
\end{equation*}%
Together with (\ref{id}),  it results from (\ref{a1}) that 

\begin{eqnarray}
\frac{p}{3}\int_{M}\left\vert \mathrm{Rm}\right\vert ^{p}f^{a}\phi ^{q}
&\leq &-2p\int_{M}R_{ijkh}f_{h}R_{ijkl}\left( \nabla _{l}\left\vert \mathrm{%
Rm}\right\vert ^{p-2}\right) f^{a}\phi ^{q}  \label{a3} \\
&&+\frac{2pq}{r}\int_{M}\left\vert R_{ijkh}f_{h}\right\vert ^{2}\left\vert 
\mathrm{Rm}\right\vert ^{p-2}f^{a}\phi ^{q-1}  \notag \\
&&+C.  \notag
\end{eqnarray}%
By (\ref{id}) again, we have 

\begin{eqnarray*}
&&-2p\int_{M}R_{ijkh}f_{h}R_{ijkl}\left( \nabla _{l}\left\vert \mathrm{Rm}%
\right\vert ^{p-2}\right) f^{a}\phi ^{q} \\
&\leq &cp^{2}\int_{M}\left\vert \nabla \mathrm{Ric}\right\vert \left\vert
\nabla \mathrm{Rm}\right\vert \left\vert \mathrm{Rm}\right\vert
^{p-2}f^{a}\phi ^{q}.
\end{eqnarray*}
For $\alpha \in \left\{ 0,1\right\} ,$ it follows from (\ref{a3}) that 

\begin{eqnarray}
\int_{M}\left\vert \mathrm{Rm}\right\vert ^{p}f^{a}\phi ^{q} &\leq
&cp^{3}\int_{M}\left\vert \nabla \mathrm{Ric}\right\vert ^{2}\left\vert 
\mathrm{Rm}\right\vert ^{p-1-\alpha }f^{a}\phi ^{q}  \label{a5} \\
&&+\frac{c}{p}\int_{M}\left\vert \nabla \mathrm{Rm}\right\vert
^{2}\left\vert \mathrm{Rm}\right\vert ^{p-3+\alpha }f^{a}\phi ^{q}.  \notag
\\
&&+\frac{cp}{r}\int_{M}\left\vert R_{ijkh}f_{h}\right\vert ^{2}\left\vert 
\mathrm{Rm}\right\vert ^{p-2}f^{a}\phi ^{q-1}+C.  \notag
\end{eqnarray}

We now estimate the first term on the right side of (\ref{a5}). 
Note that (\ref{id2}) implies 
\begin{eqnarray*}
\Delta _{f}\left( \left\vert \mathrm{Ric}\right\vert ^{2}\left\vert \mathrm{%
Rm}\right\vert ^{p-1-\alpha }\right)  &=&\left( \Delta _{f}\left\vert 
\mathrm{Ric}\right\vert ^{2}\right) \left\vert \mathrm{Rm}\right\vert
^{p-1-\alpha }+\left\vert \mathrm{Ric}\right\vert ^{2}\Delta _{f}\left\vert 
\mathrm{Rm}\right\vert ^{p-1-\alpha } \\
&&+2\left\langle \nabla \left\vert \mathrm{Ric}\right\vert ^{2},\nabla
\left\vert \mathrm{Rm}\right\vert ^{p-1-\alpha }\right\rangle  \\
&\geq &2\left\vert \nabla \mathrm{Ric}\right\vert ^{2}\left\vert \mathrm{Rm}%
\right\vert ^{p-1-\alpha }-cp\left\vert \mathrm{Ric}\right\vert
^{2}\left\vert \mathrm{Rm}\right\vert ^{p-\alpha } \\
&&-cp\left\vert \nabla \mathrm{Ric}\right\vert \left\vert \nabla \mathrm{Rm}%
\right\vert \left\vert \mathrm{Ric}\right\vert \left\vert \mathrm{Rm}%
\right\vert ^{p-2-\alpha }.
\end{eqnarray*}%
Consequently, we get 

\begin{eqnarray}
&&2\int_{M}\left\vert \nabla \mathrm{Ric}\right\vert ^{2}\left\vert \mathrm{%
Rm}\right\vert ^{p-1-\alpha }f^{a}\phi ^{q}  \label{a6} \\
&\leq &\int_{M}\Delta _{f}\left( \left\vert \mathrm{Ric}\right\vert
^{2}\left\vert \mathrm{Rm}\right\vert ^{p-1-\alpha }\right) f^{a}\phi
^{q}+cp\int_{M}\left\vert \mathrm{Ric}\right\vert ^{2}\left\vert \mathrm{Rm}%
\right\vert ^{p-\alpha }f^{a}\phi ^{q}  \notag \\
&&+cp\int_{M}\left\vert \nabla \mathrm{Ric}\right\vert \left\vert \nabla 
\mathrm{Rm}\right\vert \left\vert \mathrm{Ric}\right\vert \left\vert \mathrm{%
Rm}\right\vert ^{p-2-\alpha }f^{a}\phi ^{q}.  \notag
\end{eqnarray}

The last term in (\ref{a6}) can be estimated by 

\begin{eqnarray}
&&2\int_{M}\left\vert \nabla \mathrm{Ric}\right\vert \left\vert \nabla 
\mathrm{Rm}\right\vert \left\vert \mathrm{Ric}\right\vert \left\vert \mathrm{%
Rm}\right\vert ^{p-2-\alpha }f^{a}\phi ^{q}  \label{a8} \\
&\leq &\frac{1}{p^{5}}\int_{M}\left\vert \nabla \mathrm{Rm}\right\vert
^{2}\left\vert \mathrm{Rm}\right\vert ^{p-3+\alpha }f^{a}\phi ^{q}  \notag \\
&&+p^{5}\ \int_{M}\left\vert \nabla \mathrm{Ric}\right\vert ^{2}\left\vert 
\mathrm{Ric}\right\vert ^{2}\left\vert \mathrm{Rm}\right\vert ^{p-1-3\alpha
}f^{a}\phi ^{q}.  \notag
\end{eqnarray}%
We claim that 

\begin{eqnarray}
&&p^{5}\, \int_{M}\left\vert \nabla \mathrm{Ric}\right\vert ^{2}\left\vert 
\mathrm{Ric}\right\vert ^{2}\left\vert \mathrm{Rm}\right\vert ^{p-1-3\alpha
}f^{a}\phi ^{q}  \label{a9} \\
&\leq &\frac{1}{p^{2}}\int_{M}\left\vert \nabla \mathrm{Ric}\right\vert
^{2}\left\vert \mathrm{Rm}\right\vert ^{p-1-\alpha }f^{a}\phi ^{q}  \notag \\
&&+c\left( p\right) \alpha \int_{M}\left\vert \nabla \mathrm{Ric}\right\vert
^{2}\left\vert \mathrm{Ric}\right\vert ^{p-2}f^{a}\phi ^{q}+C.  \notag
\end{eqnarray}%
Indeed, for $\alpha =0$ this is obvious by (\ref{a2}), whereas for 
$\alpha =1,$ it follows immediately from Young's inequality. Plugging 
(\ref{a9}) into (\ref{a8}), we get 
 
\begin{eqnarray}
&&2\int_{M}\left\vert \nabla \mathrm{Ric}\right\vert \left\vert \nabla 
\mathrm{Rm}\right\vert \left\vert \mathrm{Ric}\right\vert \left\vert \mathrm{%
Rm}\right\vert ^{p-2-\alpha }f^{a}\phi ^{q}  \label{a10} \\
&\leq &\frac{1}{p^{5}}\int_{M}\left\vert \nabla \mathrm{Rm}\right\vert
^{2}\left\vert \mathrm{Rm}\right\vert ^{p-3+\alpha }f^{a}\phi ^{q}  \notag \\
&&+\frac{1}{p^{2}}\int_{M}\left\vert \nabla \mathrm{Ric}\right\vert
^{2}\left\vert \mathrm{Rm}\right\vert ^{p-1-\alpha }f^{a}\phi ^{q}  \notag \\
&&+c\left( p\right) \alpha \int_{M}\left\vert \nabla \mathrm{Ric}\right\vert
^{2}\left\vert \mathrm{Ric}\right\vert ^{p-2}f^{a}\phi ^{q}+C.  \notag
\end{eqnarray}
We now estimate
 
\begin{eqnarray}
\int_{M}\left\vert \nabla \mathrm{Ric}\right\vert ^{2}\left\vert \mathrm{Ric}%
\right\vert ^{p-2}f^{a}\phi ^{q} &\leq &\int_{M}\left( \Delta \left\vert 
\mathrm{Ric}\right\vert ^{2}\right) \left\vert \mathrm{Ric}\right\vert
^{p-2}f^{a}\phi ^{q}  \label{a11} \\
&&-\int_{M}\left\langle \nabla f,\nabla \left\vert \mathrm{Ric}\right\vert
^{2}\right\rangle \left\vert \mathrm{Ric}\right\vert ^{p-2}f^{a}\phi ^{q} 
\notag \\
&&+c\int_{M}\left\vert \mathrm{Rm}\right\vert \left\vert \mathrm{Ric}%
\right\vert ^{p}f^{a}\phi ^{q}  \notag \\
&\leq &-a\int_{M}\left\langle \nabla f,\nabla \left\vert \mathrm{Ric}%
\right\vert ^{2}\right\rangle \left\vert \mathrm{Ric}\right\vert
^{p-2}f^{a-1}\phi ^{q}  \notag \\
&&+\frac{q}{r}\int_{M}\left\langle \nabla f,\nabla \left\vert \mathrm{Ric}%
\right\vert ^{2}\right\rangle \left\vert \mathrm{Ric}\right\vert
^{p-2}f^{a}\phi ^{q-1}  \notag \\
&&-\int_{M}\left\langle \nabla f,\nabla \left\vert \mathrm{Ric}\right\vert
^{2}\right\rangle \left\vert \mathrm{Ric}\right\vert ^{p-2}f^{a}\phi ^{q} 
\notag \\
&&+c\int_{M}\left\vert \mathrm{Rm}\right\vert \left\vert \mathrm{Ric}%
\right\vert ^{p}f^{a}\phi ^{q}.  \notag
\end{eqnarray}
Note that 

\begin{eqnarray*}
-\int_{M}\left\langle \nabla f,\nabla \left\vert \mathrm{Ric}\right\vert
^{2}\right\rangle \left\vert \mathrm{Ric}\right\vert ^{p-2}f^{a}\phi ^{q}
&=&-\frac{2}{p}\int_{M}\left\langle \nabla f,\nabla \left\vert \mathrm{Ric}%
\right\vert ^{p}\right\rangle f^{a}\phi ^{q} \\
&=&\frac{2}{p}\int_{M}\left\vert \mathrm{Ric}\right\vert ^{p}\left( \Delta
f+a\left\vert \nabla f\right\vert ^{2}f^{-1}\right) f^{a}\phi ^{q} \\
&&-\frac{2q}{pr}\int_{M}\left\vert \mathrm{Ric}\right\vert ^{p}f^{a}\phi
^{q-1} \\
&\leq &c\int_{M}\left\vert \mathrm{Ric}\right\vert ^{p}f^{a}\phi ^{q}.
\end{eqnarray*}%
A similar argument also implies 

\begin{eqnarray*}
\frac{q}{r}\int_{M}\left\langle \nabla f,\nabla \left\vert \mathrm{Ric}%
\right\vert ^{2}\right\rangle \left\vert \mathrm{Ric}\right\vert
^{p-2}f^{a}\phi ^{q-1} &\leq &\frac{cp}{r}\int_{M}\left\vert \mathrm{Ric}%
\right\vert ^{p}f^{a}\phi ^{q-2}+C \\
&\leq &c\int_{M}\left\vert \mathrm{Ric}\right\vert ^{p}f^{a}\phi ^{q}+\frac{%
c\left( p\right) }{r^{\frac{q}{2}}}\int_{D\left( r\right) }\vert \mathrm{Ric}\vert^p\,f^{a}+C \\
&\leq &c\int_{M}\left\vert \mathrm{Ric}\right\vert ^{p}f^{a}\phi ^{q}+C
\end{eqnarray*}%
in view of (\ref{V}) and $\vert \mathrm{Ric}\vert \leq C.$

It is then easy to see that these estimates and (\ref{a11}) imply that 
\begin{eqnarray}
c\left( p\right) \int_{M}\left\vert \nabla \mathrm{Ric}\right\vert
^{2}\left\vert \mathrm{Ric}\right\vert ^{p-2}f^{a}\phi ^{q} &\leq &c\left(
p\right) \int_{M}\left\vert \mathrm{Ric}\right\vert ^{p}f^{a}\phi ^{q}
\label{a12} \\
&&+c\left( p\right) \int_{M}\left\vert \mathrm{Rm}\right\vert \left\vert 
\mathrm{Ric}\right\vert ^{p}f^{a}\phi ^{q}+C  \notag \\
&\leq &\frac{1}{p^{5}}\int_{M}\left\vert \mathrm{Rm}\right\vert
^{p}f^{a}\phi ^{q}  \notag \\
&&+c\left( p\right) \int_{M}\left\vert \mathrm{Ric}\right\vert ^{p}f^{a}\phi
^{q}+C.  \notag
\end{eqnarray}%
Using (\ref{a12}) and (\ref{a10}) we get that 

\begin{eqnarray}
&&\int_{M}\left\vert \nabla \mathrm{Ric}\right\vert \left\vert \nabla 
\mathrm{Rm}\right\vert \left\vert \mathrm{Ric}\right\vert \left\vert \mathrm{%
Rm}\right\vert ^{p-2-\alpha }f^{a}\phi ^{q}  \label{a13} \\
&\leq &\frac{1}{p^{5}}\int_{M}\left\vert \nabla \mathrm{Rm}\right\vert
^{2}\left\vert \mathrm{Rm}\right\vert ^{p-3+\alpha }f^{a}\phi ^{q}  \notag \\
&&+\frac{1}{p^{2}}\int_{M}\left\vert \nabla \mathrm{Ric}\right\vert
^{2}\left\vert \mathrm{Rm}\right\vert ^{p-1-\alpha }f^{a}\phi ^{q}  \notag \\
&&+\frac{1}{p^{5}}\int_{M}\left\vert \mathrm{Rm}\right\vert ^{p}f^{a}\phi
^{q}+c\left( p\right) \int_{M}\left\vert \mathrm{Ric}\right\vert
^{p}f^{a}\phi ^{q}+C.  \notag
\end{eqnarray}

We estimate the first term in (\ref{a6}) as follows. First, observe
that integration by parts yields

\begin{eqnarray}
&&\int_{M}\Delta _{f}\left( \left\vert \mathrm{Ric}\right\vert
^{2}\left\vert \mathrm{Rm}\right\vert ^{p-1-\alpha }\right) f^{a}\phi ^{q}
\label{a14} \\
&=&\int_{M}\Delta \left( \left\vert \mathrm{Ric}\right\vert ^{2}\left\vert 
\mathrm{Rm}\right\vert ^{p-1-\alpha }\right) f^{a}\phi ^{q}  \notag \\
&&-\int_{M}\left\langle \nabla f,\nabla \left(\left\vert \mathrm{Ric}\right\vert
^{2}\left\vert \mathrm{Rm}\right\vert ^{p-1-\alpha }\right)\right\rangle f^{a}\phi
^{q}  \notag \\
&=&\int_{M}\left\vert \mathrm{Ric}\right\vert ^{2}\left\vert \mathrm{Rm}%
\right\vert ^{p-1-\alpha }\Delta \left( f^{a}\phi ^{q}\right)   \notag \\
&&+\int_{M}\left\vert \mathrm{Ric}\right\vert ^{2}\left\vert \mathrm{Rm}%
\right\vert ^{p-1-\alpha }\left( \Delta f+a\left\vert \nabla f\right\vert
^{2}f^{-1}\right) f^{a}\phi ^{q}  \notag \\
&&-\frac{q}{r}\int_{M}\left\vert \mathrm{Ric}\right\vert ^{2}\left\vert 
\mathrm{Rm}\right\vert ^{p-1-\alpha }\left\vert \nabla f\right\vert
^{2}f^{a}\phi ^{q-1}.  \notag
\end{eqnarray}%
A direct computation shows that 

\begin{eqnarray*}
\Delta \left( f^{a}\phi ^{q}\right)  &\leq &\left( \Delta f^{a}\right) \phi
^{q}+f^{a}\Delta \phi ^{q} \\
&\leq &cp^{2}f^{a-1}\phi ^{q-2}.
\end{eqnarray*}%
Consequently, we get that  

\begin{eqnarray*}
\int_{M}\left\vert \mathrm{Ric}\right\vert ^{2}\left\vert \mathrm{Rm}%
\right\vert ^{p-1-\alpha }\Delta \left( f^{a}\phi ^{q}\right)  
&\leq& c\,p^{2}\int_{M}\left\vert \mathrm{Ric}\right\vert ^{2}\left\vert \mathrm{Rm}%
\right\vert ^{p-1-\alpha }f^{a-1}\phi ^{q-2} \\
&\leq &c\,p^{2}\int_{M}\left\vert \mathrm{Rm}\right\vert ^{p-1}f^{a-1}\phi
^{q-2}+C \\
&\leq &\frac{c}{p^{4}}\int_{M}\left\vert \mathrm{Rm}\right\vert
^{p}f^{a}\phi ^{q}+c\left( p\right) \int_{M}f^{a-p}\phi ^{q-2p} +C\\
&\leq &\frac{c}{p^{4}}\int_{M}\left\vert \mathrm{Rm}\right\vert
^{p}f^{a}\phi ^{q}+C,
\end{eqnarray*}%
where in the last line we have used (\ref{V}) and (\ref{const}) to
infer that $\int_{M}f^{a-p}\phi ^{q-2p}\leq C.$ 

Therefore, it follows from (\ref{a14}) that 
\begin{eqnarray}
&&\int_{M}\Delta _{f}\left( \left\vert \mathrm{Ric}\right\vert
^{2}\left\vert \mathrm{Rm}\right\vert ^{p-1-\alpha }\right) f^{a}\phi ^{q}
\label{a15} \\
&\leq &cp\int_{M}\left\vert \mathrm{Ric}\right\vert ^{2}\left\vert \mathrm{Rm%
}\right\vert ^{p-1-\alpha }f^{a}\phi ^{q}  \notag \\
&&+\frac{c}{p^{4}}\int_{M}\left\vert \mathrm{Rm}\right\vert ^{p}f^{a}\phi
^{q}+C  \notag \\
&\leq &\frac{c}{p^{4}}\int_{M}\left\vert \mathrm{Rm}\right\vert
^{p}f^{a}\phi ^{q}+c\left( p\right) \int_{M}\left\vert \mathrm{Ric}%
\right\vert ^{p}f^{a}\phi ^{q}+C,  \notag
\end{eqnarray}%
where we have used Young's inequality to obtain the inequality. 

Plugging (\ref{a13}) and (\ref{a15}) into (\ref{a6}), we conclude that

\begin{eqnarray}
cp^{3}\int_{M}\left\vert \nabla \mathrm{Ric}\right\vert ^{2}\left\vert 
\mathrm{Rm}\right\vert ^{p-1-\alpha }f^{a}\phi ^{q} 
&\leq &\frac{c}{p}
\int_{M}\left\vert \nabla \mathrm{Rm}\right\vert ^{2}\left\vert \mathrm{Rm}%
\right\vert ^{p-3+\alpha }f^{a}\phi ^{q}  \label{a19} \\
&&+\frac{c}{p}\int_{M}\left\vert \mathrm{Rm}\right\vert ^{p}f^{a}\phi ^{q} 
\notag \\
&&+c\left( p\right) \int_{M}\left\vert \mathrm{Ric}\right\vert ^{p}f^{a}\phi
^{q}+C  \notag
\end{eqnarray}
as 

\begin{eqnarray*}
cp \int_{M}\left\vert \mathrm{Ric}\right\vert ^{2}\left\vert \mathrm{Rm}%
\right\vert ^{p-\alpha }\,f^{a}\,\phi ^{q} \leq
\frac{1}{p}\int_{M}\left\vert \mathrm{Rm}\right\vert ^{p}\,f^{a}\,\phi ^{q} 
+c\left( p\right) \int_{M}\left\vert \mathrm{Ric}\right\vert ^{p}\,f^{a}\,\phi
^{q}+C.
\end{eqnarray*}

For the third term on the right hand side of (\ref{a5}), we now claim that
 
\begin{eqnarray}
&&\frac{cp}{r}\int_{M}\left\vert R_{ijkh}f_{h}\right\vert ^{2}\left\vert 
\mathrm{Rm}\right\vert ^{p-2}f^{a}\phi ^{q-1}  \label{a20} \\
&\leq &\frac{c}{p}\int_{M}\left\vert \nabla \mathrm{Rm}\right\vert
^{2}\left\vert \mathrm{Rm}\right\vert ^{p-3+\alpha }f^{a}\phi ^{q}  \notag \\
&&+\frac{c}{p}\int_{M}\left\vert \mathrm{Rm}\right\vert ^{p}f^{a}\phi ^{q}+C.
\notag
\end{eqnarray}%
Indeed, by (\ref{id}),
 
\begin{eqnarray*}
&&\frac{cp}{r}\int_{M}\left\vert R_{ijkh}f_{h}\right\vert ^{2}\left\vert 
\mathrm{Rm}\right\vert ^{p-2}f^{a}\phi ^{q-1} \\
&=&\frac{c\,p}{r}\int_{M}\nabla _{j}R_{ik}\left( R_{ijkh}f_{h}\right)
\left\vert \mathrm{Rm}\right\vert ^{p-2}f^{a}\phi ^{q-1} \\
&=&-\frac{c\,p}{r}\int_{M}R_{ik}f_{h}\left( \nabla _{j}R_{ijkh}\right)
\left\vert \mathrm{Rm}\right\vert ^{p-2}f^{a}\phi ^{q-1} \\
&&-\frac{c\,p}{r}\int_{M}R_{ik}R_{ijkh}f_{h}\left( \nabla _{j}\left\vert 
\mathrm{Rm}\right\vert ^{p-2}\right) f^{a}\phi ^{q-1} \\
&&-\frac{c\,p}{r}\int_{M}R_{ik}f_{hj}R_{ijkh}\left\vert \mathrm{Rm}\right\vert
^{p-2}f^{a}\phi ^{q-1} \\
&&-\frac{c\,a\,p}{r}\int_{M}R_{ik}R_{ijkh}f_{h}f_{j}\left\vert \mathrm{Rm}%
\right\vert ^{p-2}f^{a-1}\phi ^{q-1} \\
&&+\frac{c\,p\,(q-1)}{r^{2}}\int_{M}R_{ik}R_{ijkh}f_{h}f_{j}\left\vert \mathrm{Rm}%
\right\vert ^{p-2}f^{a}\phi ^{q-2}.
\end{eqnarray*}%
The last three terms above can be bounded by 
\begin{eqnarray*}
&&-\frac{c\,p}{r}\int_{M}R_{ik}f_{hj}R_{ijkh}\left\vert \mathrm{Rm}\right\vert
^{p-2}f^{a}\phi ^{q-1} \\
&&-\frac{c\,a\,p}{r}\int_{M}R_{ik}R_{ijkh}f_{h}f_{j}\left\vert \mathrm{Rm}%
\right\vert ^{p-2}f^{a-1}\phi ^{q-1} \\
&&+\frac{c\,q\,p}{r^{2}}\int_{M}R_{ik}R_{ijkh}f_{h}f_{j}\left\vert \mathrm{Rm}%
\right\vert ^{p-2}f^{a}\phi ^{q-2} \\
&\leq &\frac{c}{r}\int_{M}\left\vert \mathrm{Rm}\right\vert ^{p-1}f^{a}\phi
^{q-2} \\
&\leq &\frac{c}{p}\int_{M}\left\vert \mathrm{Rm}\right\vert ^{p}f^{a}\phi
^{q}+C.
\end{eqnarray*}%
Furthermore, note that 

\begin{eqnarray*}
&&-\frac{cp}{r}\int_{M}R_{ik}R_{ijkh}f_{h}\left( \nabla _{j}\left\vert 
\mathrm{Rm}\right\vert ^{p-2}\right) f^{a}\phi ^{q-1} \\
&&-\frac{cp}{r}\int_{M}R_{ik}f_{h}\left( \nabla _{j}R_{ijkh}\right)
\left\vert \mathrm{Rm}\right\vert ^{p-2}f^{a}\phi ^{q-1}\\
&\leq &\frac{cp^{2}}{\sqrt{r}}\int_{M}\left\vert \mathrm{Ric}\right\vert
\left\vert \nabla \mathrm{Rm}\right\vert \left\vert \mathrm{Rm}\right\vert
^{p-2}f^{a}\phi ^{q-1} \\
&\leq &\frac{1}{p}\int_{M}\left\vert \nabla \mathrm{Rm}\right\vert
^{2}\left\vert \mathrm{Rm}\right\vert ^{p-3+\alpha }f^{a}\phi ^{q}+\frac{%
c\left( p\right) }{r}\int_{M}\left\vert \mathrm{Ric}\right\vert
^{2}\left\vert \mathrm{Rm}\right\vert ^{p-1-\alpha }f^{a}\phi ^{q-2}.
\end{eqnarray*}%
Since 

\begin{eqnarray*}
&& \frac{c\left( p\right) }{r}\int_{M}\left\vert \mathrm{Ric}\right\vert
^{2}\left\vert \mathrm{Rm}\right\vert ^{p-1-\alpha }\,f^{a}\,\phi ^{q-2} \\
&\leq & \frac{1}{p}\,\int_{M}\left\vert \mathrm{Rm}\right\vert ^{p}\,f^{a}\,\phi^{q}+\frac{c\left( p\right) }{r^{\frac{p}{1+\alpha}}}\,
\int_{M}\left\vert \mathrm{Ric}\right\vert^{\frac{2p}{1+\alpha}}\,f^{a}\,
\phi ^{q-\frac{2p}{1+\alpha}} \\
&\leq & \frac{1}{p}\,\int_{M}\left\vert \mathrm{Rm}\right\vert ^{p}\,f^{a}\,
\phi^{q}+C,
\end{eqnarray*}
we see that 

\begin{eqnarray*}
&&-\frac{cp}{r}\int_{M}R_{ik}R_{ijkh}f_{h}\left( \nabla _{j}\left\vert 
\mathrm{Rm}\right\vert ^{p-2}\right) f^{a}\phi ^{q-1} \\
&\leq &\frac{1}{p}\int_{M}\left\vert \nabla \mathrm{Rm}\right\vert
^{2}\left\vert \mathrm{Rm}\right\vert ^{p-3+\alpha }f^{a}\phi ^{q}+\frac{1}{p%
}\int_{M}\left\vert \mathrm{Rm}\right\vert ^{p}f^{a}\phi ^{q}+C.
\end{eqnarray*}%
These estimates imply that (\ref{a20}) is indeed true. Putting (\ref{a19}) and
(\ref{a20}) into (\ref{a5}) we conclude that

\begin{eqnarray}
\int_{M}\left\vert \mathrm{Rm}\right\vert ^{p}f^{a}\phi ^{q} &\leq &\frac{c}{%
p}\int_{M}\left\vert \nabla \mathrm{Rm}\right\vert ^{2}\left\vert \mathrm{Rm}%
\right\vert ^{p-3+\alpha }f^{a}\phi ^{q}  \label{a21} \\
&&+c\left( p\right) \int_{M}\left\vert \mathrm{Ric}\right\vert ^{p}f^{a}\phi
^{q}+C.  \notag
\end{eqnarray}

We now use that 
\begin{equation*}
2\left\vert \nabla \mathrm{Rm}\right\vert ^{2}\leq \Delta \left\vert \mathrm{%
Rm}\right\vert ^{2}-\left\langle \nabla f,\nabla \left\vert \mathrm{Rm}%
\right\vert ^{2}\right\rangle +c\left\vert \mathrm{Rm}\right\vert ^{3}
\end{equation*}%
to estimate 
\begin{eqnarray*}
2\int_{M}\left\vert \nabla \mathrm{Rm}\right\vert ^{2}\left\vert \mathrm{Rm}%
\right\vert ^{p-3+\alpha }f^{a}\phi ^{q} &\leq &\int_{M}\left( \Delta
\left\vert \mathrm{Rm}\right\vert ^{2}\right) \left\vert \mathrm{Rm}%
\right\vert ^{p-3+\alpha }f^{a}\phi ^{q} \\
&&-\int_{M}\left\langle \nabla f,\nabla \left\vert \mathrm{Rm}\right\vert
^{2}\right\rangle \left\vert \mathrm{Rm}\right\vert ^{p-3+\alpha }f^{a}\phi
^{q} \\
&&+c\int_{M}\left\vert \mathrm{Rm}\right\vert ^{p+\alpha }f^{a}\phi ^{q} \\
&\leq &-a\int_{M}\left\langle \nabla f,\nabla \left\vert \mathrm{Rm}%
\right\vert ^{2}\right\rangle \left\vert \mathrm{Rm}\right\vert ^{p-3+\alpha
}f^{a-1}\phi ^{q} \\
&&+\frac{q}{r}\int_{M}\left\langle \nabla f,\nabla \left\vert \mathrm{Rm}%
\right\vert ^{2}\right\rangle \left\vert \mathrm{Rm}\right\vert ^{p-3+\alpha
}f^{a}\phi ^{q-1} \\
&&-\int_{M}\left\langle \nabla f,\nabla \left\vert \mathrm{Rm}\right\vert
^{2}\right\rangle \left\vert \mathrm{Rm}\right\vert ^{p-3+\alpha }f^{a}\phi
^{q} \\
&&+c\int_{M}\left\vert \mathrm{Rm}\right\vert ^{p+\alpha }f^{a}\phi ^{q}.
\end{eqnarray*}%
However, 
\begin{eqnarray*}
&&-\int_{M}\left\langle \nabla f,\nabla \left\vert \mathrm{Rm}\right\vert
^{2}\right\rangle \left\vert \mathrm{Rm}\right\vert ^{p-3+\alpha }f^{a}\phi
^{q} \\
&=&-\frac{2}{p-1+\alpha }\int_{M}\left\langle \nabla f,\nabla \left\vert 
\mathrm{Rm}\right\vert ^{p-1+\alpha }\right\rangle f^{a}\phi ^{q} \\
&=&\frac{2}{p-1+\alpha }\int_{M}\left( \left( \Delta f\right)
f^{a}+a\left\vert \nabla f\right\vert ^{2}f^{a-1}\right) \left\vert \mathrm{%
Rm}\right\vert ^{p-1+\alpha }\phi ^{q} \\
&&-\frac{2q}{p-1+\alpha }\frac{1}{r}\int_{M}\left\vert \nabla f\right\vert
^{2}f^{a}\left\vert \mathrm{Rm}\right\vert ^{p-1+\alpha }\phi ^{q-1} \\
&\leq &c\int_{M}\left\vert \mathrm{Rm}\right\vert ^{p-1+\alpha }f^{a}\phi
^{q}.
\end{eqnarray*}%
Similarly, 
\begin{eqnarray*}
&&-a\int_{M}\left\langle \nabla f,\nabla \left\vert \mathrm{Rm}\right\vert
^{2}\right\rangle \left\vert \mathrm{Rm}\right\vert ^{p-3+\alpha
}f^{a-1}\phi ^{q} \\
&\leq &cp\int_{M}\left\vert \mathrm{Rm}\right\vert ^{p-1+\alpha }f^{a-1}\phi
^{q} \\
&\leq &c\int_{M}\left\vert \mathrm{Rm}\right\vert ^{p-1+\alpha }f^{a}\phi
^{q}+C.
\end{eqnarray*}%
Finally, a similar argument implies that 
\begin{eqnarray*}
&&\frac{q}{r}\int_{M}\left\langle \nabla f,\nabla \left\vert \mathrm{Rm}%
\right\vert ^{2}\right\rangle \left\vert \mathrm{Rm}\right\vert ^{p-3+\alpha
}f^{a}\phi ^{q-1} \\
&\leq &\frac{cp}{r}\int_{M}\left\vert \mathrm{Rm}\right\vert ^{p-1+\alpha
}f^{a}\phi ^{q-2}+C \\
&\leq &c\int_{M}\left\vert \mathrm{Rm}\right\vert ^{p+\alpha }f^{a}\phi ^{q}+%
\frac{c\left( p\right) }{r^{p+\alpha }}\int_{M}f^{a}\phi ^{q-2\left(
p+\alpha \right) }+C \\
&\leq &c\int_{M}\left\vert \mathrm{Rm}\right\vert ^{p+\alpha }f^{a}\phi
^{q}+C,
\end{eqnarray*}%
where we have used (\ref{const}) and (\ref{V}) in the last line. The above
estimates show that 
\begin{eqnarray*}
\int_{M}\left\vert \nabla \mathrm{Rm}\right\vert ^{2}\left\vert \mathrm{Rm}%
\right\vert ^{p-2}f^{a}\phi ^{q} &\leq &c\int_{M}\left\vert \mathrm{Rm}%
\right\vert ^{p-1+\alpha }f^{a}\phi ^{q} \\
&&+c\int_{M}\left\vert \mathrm{Rm}\right\vert ^{p+\alpha }f^{a}\phi ^{q}+C.
\end{eqnarray*}%
Plugging this in (\ref{a21}), we arrive at
 
\begin{eqnarray*}
\int_{M}\left\vert \mathrm{Rm}\right\vert ^{p}f^{a}\phi ^{q} &\leq &\frac{c}{%
p}\int_{M}\left\vert \mathrm{Rm}\right\vert ^{p-1+\alpha }f^{a}\phi ^{q}+%
\frac{c}{p}\int_{M}\left\vert \mathrm{Rm}\right\vert ^{p+\alpha }f^{a}\phi
^{q} \\
&&+c\left( p\right) \int_{M}\left\vert \mathrm{Ric}\right\vert ^{p}f^{a}\phi
^{q}+C.
\end{eqnarray*}%
This proves the lemma.
\end{proof}

Using Lemma \ref{int} we obtain the following crucial result. From now on,
we assume $p\geq 8n$, $a\leq \frac{1}{4}p$ and $q\geq 2p+5.$

\begin{proposition}
\label{Rm}Let $\left( M,g,f\right) $ be an $n$ dimensional shrinking Ricci
soliton with $\lim_{x\to \infty} \left\vert \mathrm{Ric}\right\vert (x)=0.$
Then 
\begin{equation*}
\int_{M}\left\vert \mathrm{Rm}\right\vert ^{p}f^{a}\phi ^{q}\leq c\left(
p\right) \int_{M}\left\vert \mathrm{Ric}\right\vert ^{p}f^{a}\phi ^{q}+C.
\end{equation*}
\end{proposition}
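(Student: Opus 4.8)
The plan is to apply Lemma \ref{int} twice—once at exponent $p$ with $\alpha=1$, and once at exponent $p+1$ with $\alpha=0$—and then to absorb the resulting curvature terms. Taking $\alpha=1$ in Lemma \ref{int} gives
\[
\int_M\left\vert \mathrm{Rm}\right\vert ^{p}f^{a}\phi ^{q}\leq \frac{c}{p}\int_M\left\vert \mathrm{Rm}\right\vert ^{p}f^{a}\phi ^{q}+\frac{c}{p}\int_M\left\vert \mathrm{Rm}\right\vert ^{p+1}f^{a}\phi ^{q}+c(p)\int_M\left\vert \mathrm{Ric}\right\vert ^{p}f^{a}\phi ^{q}+C.
\]
Since $p\geq 8n$ is large, the first term on the right is absorbed into the left side. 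As $\phi$ is supported in the compact set $D(r)$, all these integrals are finite, so this absorption is legitimate; note also that every constant is independent of $r$. This yields
\[
\int_M\left\vert \mathrm{Rm}\right\vert ^{p}f^{a}\phi ^{q}\leq \frac{c}{p}\int_M\left\vert \mathrm{Rm}\right\vert ^{p+1}f^{a}\phi ^{q}+c(p)\int_M\left\vert \mathrm{Ric}\right\vert ^{p}f^{a}\phi ^{q}+C. \tag{A}
\]

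The next step is to control the higher power $\int_M|\mathrm{Rm}|^{p+1}f^{a}\phi ^{q}$ by reapplying Lemma \ref{int} with $p$ replaced by $p+1$ and with $\alpha=0$. This is permissible precisely because the hypothesis has been strengthened to $q\geq 2p+5=2(p+1)+3$ and because $a\leq \frac14 p\leq \frac14(p+1)$, so the constraints of the lemma are met at exponent $p+1$. After absorbing the self term $\frac{c}{p+1}\int_M|\mathrm{Rm}|^{p+1}f^{a}\phi ^{q}$, this gives
\[
\int_M\left\vert \mathrm{Rm}\right\vert ^{p+1}f^{a}\phi ^{q}\leq \frac{c}{p}\int_M\left\vert \mathrm{Rm}\right\vert ^{p}f^{a}\phi ^{q}+c(p)\int_M\left\vert \mathrm{Ric}\right\vert ^{p+1}f^{a}\phi ^{q}+C. \tag{B}
\]
Substituting $(B)$ into $(A)$ produces on the right a term with coefficient $\frac{c^2}{p^2}<1$ in front of $\int_M|\mathrm{Rm}|^{p}f^{a}\phi ^{q}$; absorbing it on the left leaves
\[
\int_M\left\vert \mathrm{Rm}\right\vert ^{p}f^{a}\phi ^{q}\leq c(p)\int_M\left\vert \mathrm{Ric}\right\vert ^{p+1}f^{a}\phi ^{q}+c(p)\int_M\left\vert \mathrm{Ric}\right\vert ^{p}f^{a}\phi ^{q}+C.
\]
Finally, since $|\mathrm{Ric}|\leq p^{-5}$ on $M\setminus D(r_0)$ by (\ref{a2}) and $|\mathrm{Ric}|\leq C$ on the compact set $D(r_0)$, one has $\int_M|\mathrm{Ric}|^{p+1}f^{a}\phi ^{q}\leq c\int_M|\mathrm{Ric}|^{p}f^{a}\phi ^{q}+C$, and the asserted estimate follows.

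The main difficulty is the very first one, namely how to dispose of the low-order curvature term. If one instead used $\alpha=0$ and applied ordinary Young's inequality to $\int_M|\mathrm{Rm}|^{p-1}f^{a}\phi ^{q}$, the residual would be an uncontrolled volume term $\int_M f^{a}\phi ^{q}\sim r^{a+n/2}$, which destroys the $r$-independence of $C$; a weighted Young's inequality avoids the volume growth only at the cost of raising the weight $a$, and that increase then propagates to the Ricci term, so the induction never closes. The device that circumvents this is precisely to raise the power instead (take $\alpha=1$) and feed the resulting $\int_M|\mathrm{Rm}|^{p+1}$ back through the lemma at exponent $p+1$: the two absorptions combine into a harmless $\frac{c^2}{p^2}$ self-coefficient, while the Ricci integrals only improve because $|\mathrm{Ric}|$ is small. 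Recognizing that the sharpened hypothesis $q\geq 2p+5$ is exactly what licenses the second application of the lemma at exponent $p+1$ is the crux of the argument.
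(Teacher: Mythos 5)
Your proof is correct and follows essentially the same route as the paper: apply Lemma \ref{int} with $\alpha=1$ at exponent $p$, then with $\alpha=0$ at exponent $p+1$ (which is exactly what the strengthened hypothesis $q\geq 2p+5$ permits), and combine the two so that the $\mathrm{Rm}$ self-terms carry a total coefficient of order $p^{-2}$ and can be absorbed. The only point you make explicit that the paper leaves implicit is the reduction of $\int_M\vert\mathrm{Ric}\vert^{p+1}f^a\phi^q$ to $\int_M\vert\mathrm{Ric}\vert^{p}f^a\phi^q$ via the smallness of $\vert\mathrm{Ric}\vert$ outside $D(r_0)$, which is a correct and welcome clarification.
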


\begin{proof}
Applying Lemma \ref{int} for $\alpha =1$ we get%
\begin{eqnarray}
\int_{M}\left\vert \mathrm{Rm}\right\vert ^{p}f^{a}\phi ^{q} &\leq &\frac{c}{%
p}\int_{M}\left\vert \mathrm{Rm}\right\vert ^{p+1}f^{a}\phi ^{q}  \label{a22}
\\
&&+c\left( p\right) \int_{M}\left\vert \mathrm{Ric}\right\vert ^{p}f^{a}\phi
^{q}+C.  \notag
\end{eqnarray}%
Since $q\geq 2\left( p+1\right) +3,$ we may apply Lemma \ref{int} for $%
\alpha =0$ and conclude that%

\begin{eqnarray}
\int_{M}\left\vert \mathrm{Rm}\right\vert ^{p+1}f^{a}\phi ^{q} &\leq &\frac{c%
}{p}\int_{M}\left\vert \mathrm{Rm}\right\vert ^{p}f^{a}\phi ^{q}  \label{a23}
\\
&&+c\left( p\right) \int_{M}\left\vert \mathrm{Ric}\right\vert ^{p}f^{a}\phi
^{q}+C.  \notag
\end{eqnarray}%
Proposition \ref{Rm} follows by combining (\ref{a22}) and (\ref{a23}).
\end{proof}

This proposition enables us to obtain the following bound for Ricci
curvature. We continue to assume that $p\geq 8n$, $a\leq \frac{1}{4}p$
and $q\geq 2p+5$.

\begin{proposition}
\label{Ric}Let $\left( M,g,f\right) $ be an $n$ dimensional shrinking Ricci
soliton with $\lim_{x\to \infty} \left\vert \mathrm{Ric}\right\vert (x)=0.$
Then 
\begin{equation*}
\int_{M}\left\vert \mathrm{Ric}\right\vert ^{p}f^{a}<\infty .
\end{equation*}
\end{proposition}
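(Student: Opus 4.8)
The plan is to exploit the Bochner formula for the Ricci tensor, whose zeroth order reaction term $2|\mathrm{Ric}|^2$ has the favorable sign, and to absorb all remaining contributions with the help of Proposition \ref{Rm}. As in the proof of Lemma \ref{int}, every integral carries the weight $f^a\phi^q$; it suffices to bound $\int_M|\mathrm{Ric}|^pf^a\phi^q$ by a constant independent of the cutoff radius $r$ and then let $r\to\infty$, since $\phi^q\uparrow1$ pointwise and monotone convergence then gives $\int_M|\mathrm{Ric}|^pf^a<\infty$.

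From $\Delta_fR_{ij}=R_{ij}-2R_{ikjl}R_{kl}$ in \ref{id2} one has $\Delta_f|\mathrm{Ric}|^2=2|\nabla\mathrm{Ric}|^2+2|\mathrm{Ric}|^2-4R_{ikjl}R_{kl}R_{ij}$. Discarding the nonnegative gradient term, using $|R_{ikjl}R_{kl}R_{ij}|\le|\mathrm{Rm}||\mathrm{Ric}|^2$, multiplying by $|\mathrm{Ric}|^{p-2}f^a\phi^q$ and integrating gives
\[ 2\int_M|\mathrm{Ric}|^pf^a\phi^q\le\int_M(\Delta_f|\mathrm{Ric}|^2)|\mathrm{Ric}|^{p-2}f^a\phi^q+c\int_M|\mathrm{Rm}||\mathrm{Ric}|^pf^a\phi^q. \]
I would then integrate the first term on the right by parts. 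This calculus identity turns it into the three terms $\int\langle\nabla f,\nabla|\mathrm{Ric}|^2\rangle|\mathrm{Ric}|^{p-2}(\cdot)$ with weights $-af^{a-1}\phi^q$, $\tfrac qrf^a\phi^{q-1}$ and $-f^a\phi^q$, plus the nonpositive term $-\int\langle\nabla|\mathrm{Ric}|^2,\nabla|\mathrm{Ric}|^{p-2}\rangle f^a\phi^q$ which is dropped. These are exactly the terms estimated in the proof of Lemma \ref{int} right after \ref{a11}. Tracking constants via $\Delta f\le\tfrac n2$, $|\nabla f|^2\le f$ and $a\le\tfrac14p$, the dominant contribution is $\tfrac{2a}{p}\le\tfrac12$ coming from the weight $-f^a\phi^q$; the weight $-af^{a-1}\phi^q$ is made negligible by taking $r_0$ large so that $f^{a-1}\le r_0^{-1}f^a$ off $D(r_0)$, and the $\tfrac qr$ term is made negligible by choosing the Young parameter small, its penalty falling into a term that vanishes as $r\to\infty$ by \ref{V}. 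Hence
\[ \int_M(\Delta_f|\mathrm{Ric}|^2)|\mathrm{Ric}|^{p-2}f^a\phi^q\le\Big(\tfrac12+\tfrac cp\Big)\int_M|\mathrm{Ric}|^pf^a\phi^q+C. \]

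The main obstacle is the term $c\int_M|\mathrm{Rm}||\mathrm{Ric}|^pf^a\phi^q$: here $|\mathrm{Rm}|$ is not yet known to be small, and applying Proposition \ref{Rm} naively costs the large factor $c(p)$. The device is Young's inequality with a weight calibrated to $c(p)$,
\[ c|\mathrm{Rm}||\mathrm{Ric}|^p\le\frac1{2c(p)}|\mathrm{Rm}|^p+C(p)\,|\mathrm{Ric}|^{\frac{p^2}{p-1}}, \]
so that, after invoking Proposition \ref{Rm}, the first contribution is exactly $\tfrac12\int_M|\mathrm{Ric}|^pf^a\phi^q+C$. For the second, since $\tfrac{p^2}{p-1}=p+\tfrac p{p-1}$ and $|\mathrm{Ric}|\le p^{-5}$ off $D(r_0)$ by \ref{a2}, one has $|\mathrm{Ric}|^{p^2/(p-1)}\le p^{-5}|\mathrm{Ric}|^p$ there; the dual constant satisfies $C(p)^{p-1}\sim c(p)/p$, so $C(p)$ stays bounded as $p\to\infty$ (the constant $c(p)$ being of polynomial order in $p$) and $C(p)/p^5\to0$. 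Therefore
\[ c\int_M|\mathrm{Rm}||\mathrm{Ric}|^pf^a\phi^q\le\Big(\tfrac12+\tfrac c{p^5}\Big)\int_M|\mathrm{Ric}|^pf^a\phi^q+C. \]

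Substituting the last two displays into the first,
\[ 2\int_M|\mathrm{Ric}|^pf^a\phi^q\le\Big(1+\tfrac cp+\tfrac c{p^5}\Big)\int_M|\mathrm{Ric}|^pf^a\phi^q+C, \]
so for $p$ large depending only on $n$ the coefficient on the right is $<2$, giving $\int_M|\mathrm{Ric}|^pf^a\phi^q\le C$ with $C$ independent of $r$. Letting $r\to\infty$ and using monotone convergence yields $\int_M|\mathrm{Ric}|^pf^a<\infty$. The one genuinely delicate point, as indicated, is the calibrated Young inequality of the third step: it balances the constant $c(p)$ of Proposition \ref{Rm} against the smallness $|\mathrm{Ric}|\le p^{-5}$ so that the $|\mathrm{Rm}|$ term returns only a fixed fraction of the reaction term rather than an uncontrolled multiple of it.
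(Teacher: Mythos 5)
Your argument is essentially the paper's: the Bochner formula for $\mathrm{Ric}$ with its favorable reaction term, integration by parts on the weighted integrals, and—crucially—Young's inequality calibrated against the constant $c_0(p)$ of Proposition \ref{Rm} so that the $\int_M |\mathrm{Rm}|\,|\mathrm{Ric}|^p f^a\phi^q$ term returns only a small multiple of $\int_M |\mathrm{Ric}|^p f^a\phi^q$. The one imprecision is your treatment of the dual term: the claim that $C(p)$ stays bounded rests on the unjustified assertion that $c(p)$ is of polynomial order in $p$ (the constants $c(p)$ produced by the repeated Young inequalities in Lemma \ref{int} may well be super-polynomial), but this is immaterial because the hypothesis $\lim_{x\to\infty}|\mathrm{Ric}|(x)=0$ lets you choose a radius $r_1$ \emph{after} $C(p)$ is determined so that $C(p)\,|\mathrm{Ric}|^{p/(p-1)}\leq \frac{1}{p}$ outside $D(r_1)$—which is exactly how the paper closes this step.
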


\begin{proof}
Recall that 
\begin{equation*}
\Delta R_{ij}-\left\langle \nabla R_{ij},\nabla f\right\rangle
=R_{ij}-2R_{ikjl}R_{ij}R_{kl}.
\end{equation*}%
This implies that 
\begin{equation*}
\left\vert \mathrm{Ric}\right\vert ^{2}\leq \frac{1}{2}\Delta \left\vert 
\mathrm{Ric}\right\vert ^{2}-\frac{1}{2}\left\langle \nabla f,\nabla
\left\vert \mathrm{Ric}\right\vert ^{2}\right\rangle +2\left\vert \mathrm{Rm}%
\right\vert \left\vert \mathrm{Ric}\right\vert ^{2}-\left\vert \nabla 
\mathrm{Ric}\right\vert ^{2}.
\end{equation*}%
Therefore, 
\begin{eqnarray}
\int_{M}\left\vert \mathrm{Ric}\right\vert ^{p}f^{a}\phi ^{q} &=&\frac{1}{2}%
\int_{M}\Delta \left\vert \mathrm{Ric}\right\vert ^{2}\left\vert \mathrm{Ric}%
\right\vert ^{p-2}f^{a}\phi ^{q}  \label{d1} \\
&&-\frac{1}{2}\int_{M}\left\langle \nabla f,\nabla \left\vert \mathrm{Ric}%
\right\vert ^{2}\right\rangle \left\vert \mathrm{Ric}\right\vert
^{p-2}f^{a}\phi ^{q}  \notag \\
&&+2\int_{M}\left\vert \mathrm{Rm}\right\vert \left\vert \mathrm{Ric}%
\right\vert ^{p}f^{a}\phi ^{q}  \notag \\
&&-\int_{M}\left\vert \nabla \mathrm{Ric}\right\vert ^{2}\left\vert \mathrm{%
Ric}\right\vert ^{p-2}f^{a}\phi ^{q}.  \notag
\end{eqnarray}%
Integrating by parts, we get 
\begin{eqnarray*}
\frac{1}{2}\int_{M}\Delta \left\vert \mathrm{Ric}\right\vert ^{2}\left\vert 
\mathrm{Ric}\right\vert ^{p-2}f^{a}\phi ^{q} &\leq &-\frac{a}{2}%
\int_{M}\left\langle \nabla f,\nabla \left\vert \mathrm{Ric}\right\vert
^{2}\right\rangle \left\vert \mathrm{Ric}\right\vert ^{p-2}f^{a-1}\phi ^{q}
\\
&&+\frac{q}{2r}\int_{M}\left\langle \nabla f,\nabla \left\vert \mathrm{Ric}%
\right\vert ^{2}\right\rangle \left\vert \mathrm{Ric}\right\vert
^{p-2}f^{a}\phi ^{q-1} \\
&\leq &cp\int_{M}\left\vert \nabla \mathrm{Ric}\right\vert \left\vert 
\mathrm{Ric}\right\vert ^{p-1}f^{a-\frac{1}{2}}\phi ^{q-1} \\
&\leq &\int_{M}\left\vert \nabla \mathrm{Ric}\right\vert ^{2}\left\vert 
\mathrm{Ric}\right\vert ^{p-2}f^{a}\phi ^{q}+cp^{2}\int_{M}\left\vert 
\mathrm{Ric}\right\vert ^{p}f^{a-1}\phi ^{q-2}.
\end{eqnarray*}%
However, (\ref{a2}) and Young's inequality imply 
\begin{eqnarray*}
cp^{2}\int_{M}\left\vert \mathrm{Ric}\right\vert ^{p}f^{a-1}\phi ^{q-2}
&\leq &\frac{c}{p}\int_{M}\left\vert \mathrm{Ric}\right\vert
^{p-1}f^{a-1}\phi ^{q-2} +C\\
&\leq &\frac{1}{p}\int_{M}\left\vert \mathrm{Ric}\right\vert ^{p}f^{a}\phi
^{q}+C.
\end{eqnarray*}%
This proves that 
\begin{eqnarray}
\frac{1}{2}\int_{M}\Delta \left\vert \mathrm{Ric}\right\vert ^{2}\left\vert 
\mathrm{Ric}\right\vert ^{p-2}f^{a}\phi ^{q} &\leq &\int_{M}\left\vert
\nabla \mathrm{Ric}\right\vert ^{2}\left\vert \mathrm{Ric}\right\vert
^{p-2}f^{a}\phi ^{q}  \label{d2} \\
&&+\frac{1}{p}\int_{M}\left\vert \mathrm{Ric}\right\vert ^{p}f^{a}\phi
^{q}+C.  \notag
\end{eqnarray}%
According to Proposition \ref{Rm}, there exists a constant $c_{0}\left(
p\right) >0$ so that 
\begin{equation*}
\int_{M}\left\vert \mathrm{Rm}\right\vert ^{p}f^{a}\phi ^{q}\leq c_{0}\left(
p\right) \int_{M}\left\vert \mathrm{Ric}\right\vert ^{p}f^{a}\phi ^{q}+C.
\end{equation*}%
For this constant $c_{0}\left( p\right) ,$ we use Young's inequality to
conclude%
\begin{eqnarray}
\int_{M}\left\vert \mathrm{Rm}\right\vert \left\vert \mathrm{Ric}\right\vert
^{p}f^{a}\phi ^{q} &\leq &\frac{1}{p}\frac{1}{c_{0}\left( p\right) }%
\int_{M}\left\vert \mathrm{Rm}\right\vert ^{p}f^{a}\phi ^{q}  \label{d3} \\
&&+c_{1}\left( p\right) \int_{M}\left\vert \mathrm{Ric}\right\vert ^{\frac{%
p^{2}}{p-1}}f^{a}\phi ^{q}  \notag \\
&\leq &\frac{1}{p}\int_{M}\left\vert \mathrm{Ric}\right\vert ^{p}f^{a}\phi
^{q}  \notag \\
&&+c_{1}\left( p\right) \int_{M}\left\vert \mathrm{Ric}\right\vert ^{p+\frac{p}{p-1}}
f^{a}\phi ^{q}+C.  \notag
\end{eqnarray}%
Since $\lim_{x\to \infty} \left\vert \mathrm{Ric}\right\vert=0,$ there
exists $r_{1}>0$ so that on $M\backslash D\left( r_{1}\right) $%
\begin{equation*}
\left\vert \mathrm{Ric}\right\vert ^{\frac{p}{p-1}}\leq \frac{1}{%
pc_{1}\left( p\right) }.
\end{equation*}%
Hence, (\ref{d3}) yields that 
\begin{equation}
\int_{M}\left\vert \mathrm{Rm}\right\vert \left\vert \mathrm{Ric}\right\vert
^{p}f^{a}\phi ^{q}\leq \frac{2}{p}\int_{M}\left\vert \mathrm{Ric}\right\vert
^{p}f^{a}\phi ^{q}+C,  \label{d4}
\end{equation}%
where $C$ depends on the geometry of $D\left( r_{1}\right) $.

Using (\ref{d2}) and (\ref{d4}) in (\ref{d1}) implies that 
\begin{equation}
\int_{M}\left\vert \mathrm{Ric}\right\vert ^{p}f^{a}\phi ^{q}\leq
-\int_{M}\left\langle \nabla f,\nabla \left\vert \mathrm{Ric}\right\vert
^{2}\right\rangle \left\vert \mathrm{Ric}\right\vert ^{p-2}f^{a}\phi ^{q}+C.
\label{d5}
\end{equation}%
However, as $a\leq \frac{p}{4}$ and $p\geq 8n$, we get 
\begin{eqnarray*}
-\int_{M}\left\langle \nabla f,\nabla \left\vert \mathrm{Ric}\right\vert
^{2}\right\rangle \left\vert \mathrm{Ric}\right\vert ^{p-2}f^{a}\phi ^{q}
&=&-\frac{2}{p}\int_{M}\left\langle \nabla f,\nabla \left\vert \mathrm{Ric}%
\right\vert ^{p}\right\rangle f^{a}\phi ^{q} \\
&\leq &\frac{2}{p}\int_{M}\left\vert \mathrm{Ric}\right\vert ^{p}\left(
\left( \Delta f\right) f^{a}+a\left\vert \nabla f\right\vert
^{2}f^{a-1}\right) \phi ^{q} \\
&\leq &\frac{2}{3}\int_{M}\left\vert \mathrm{Ric}\right\vert ^{p}f^{a}\phi
^{q}.
\end{eqnarray*}%
Together with (\ref{d5}), we obtain

\begin{equation*}
\int_{M}\left\vert \mathrm{Ric}\right\vert ^{p}f^{a}\phi ^{q}\leq C.
\end{equation*}
\end{proof}

Combining Proposition \ref{Rm} with Proposition \ref{Ric}, one concludes
that 
\begin{equation}
\int_{M}\left\vert \mathrm{Rm}\right\vert ^{p}f^{a}\leq C.  \label{f1}
\end{equation}%
We are now ready to prove the main theorem of the paper.

\begin{theorem}
\label{decay_1}Let $\left( M,g,f\right) $ be a gradient shrinking Ricci
soliton of dimension $n$ with Ricci curvature convergent to zero at
infinity. Then $\left( M,g,f\right) $ is $C^k$ asymptotic to a cone for all $%
k.$
\end{theorem}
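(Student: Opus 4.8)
The plan is to convert the weighted integral curvature bound (\ref{f1}) into the pointwise statement that $|\mathrm{Rm}|$ decays to zero at infinity, then to upgrade this qualitative decay to the quadratic rate by a maximum principle, and finally to feed the quadratic decay into Shi's estimates and the conical criterion of \cite{KW}. Having established $\int_M |\mathrm{Rm}|^p f^a \le C$ for $p \ge 8n$ and $a \le \frac14 p$, I fix $a = \frac14 p$, so that on the one hand the tail $\int_{M\setminus D(r)} |\mathrm{Rm}|^p f^a \to 0$ as $r \to \infty$, while on the other the weight $f^a$ is comparable to a fixed large power of $f(x)$ on any small ball about a far-away point $x$.

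First I would prove $\lim_{x\to\infty} |\mathrm{Rm}|(x) = 0$. By (\ref{id2}) the tensor $\mathrm{Rm}$ satisfies $\Delta_f \mathrm{Rm} = \mathrm{Rm} + \mathrm{Rm}\ast\mathrm{Rm}$, so $u := |\mathrm{Rm}|^2$ obeys the drift-Laplace inequality $\Delta_f u \ge 2u - c\,u^{3/2} \ge -c\,u^{3/2}$. Since $\mathrm{Ric}\to 0$ and shrinking solitons are non-collapsed with a uniform local volume lower bound, the geometry on a suitable ball about any point near infinity is uniformly controlled, so a local mean-value (Moser) inequality applies to the subsolution $u$. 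Combining this with $f \ge \frac12 f(x)$ on such a ball and the vanishing of the weighted tail gives $|\mathrm{Rm}|(x) \le C\, f(x)^{-a/p} = C\, f(x)^{-1/4} \to 0$. The technical care here is that the drift $\langle\nabla f,\nabla\cdot\rangle$ has size $\sim \sqrt{f}$, which grows at infinity; to keep the Moser constant uniform I would run the iteration at scale $\rho \sim f(x)^{-1/2}$ (equivalently, rescale the metric), on which the drift contributes only an $O(1)$ term and the rescaled Ricci curvature is even smaller.

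With $|\mathrm{Rm}|\to 0$ in hand, the next step follows the maximum principle argument of \cite{MW}. Writing $u = |\mathrm{Rm}|$, equation (\ref{id2}) gives $\Delta_f u \ge (1 - c\,\varepsilon)\,u$ wherever $u \le \varepsilon$, that is, outside a large compact set. A direct computation using $\Delta_f f = \frac n2 - f$ (which follows from $\Delta f = \frac n2 - S$ and $S + |\nabla f|^2 = f$) shows that for $b < 1$ the function $f^{-b}$ satisfies $\Delta_f f^{-b} = b\,f^{-b} + O(f^{-b-1})$, hence is a supersolution of $\Delta_f - (1 - c\,\varepsilon)$ once $b < 1 - c\,\varepsilon$ and $f$ is large. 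Comparing $A f^{-b}$ with $u$ by the maximum principle for $\Delta_f - (1-c\,\varepsilon)$ (whose zeroth-order coefficient is negative), with $A$ chosen large on the boundary of the compact set, yields $|\mathrm{Rm}| \le C f^{-b}$ for every $b < 1$; a bootstrap, in which the now-small nonlinear term $\mathrm{Rm}\ast\mathrm{Rm}$ is absorbed, improves this to the quadratic decay $|\mathrm{Rm}| \le C f^{-1} \le C r^{-2}$.

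Finally, quadratic decay of $|\mathrm{Rm}|$ together with Shi's local derivative estimates \cite{S} gives $|\nabla^k \mathrm{Rm}| \le C_k\, r^{-2-k}$ for all $k$. As explained in \cite{KW}, such decay of the full curvature tensor and its derivatives, combined with the gradient structure of the soliton (the flow of $\nabla f/|\nabla f|^2$ supplies the radial direction, and $f \sim \frac14 r^2$ identifies the dilation), forces the rescaled metrics $\lambda^{-2}\rho_\lambda^\ast\Phi^\ast g$ to converge in $C^k_{loc}$ to a cone metric $dr^2 + r^2 g_\Sigma$; this holds for every $k$, which is precisely the assertion of Theorem \ref{decay_1}. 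I expect the first step --- extracting pointwise decay of $\mathrm{Rm}$ from the weighted integral bound, and in particular controlling the growing drift term in the local iteration --- to be the main obstacle, since the remaining two steps are essentially quotations of \cite{MW} and \cite{KW}.
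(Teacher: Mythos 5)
Your proposal is correct and follows essentially the same route as the paper: extract $\lim_{x\to\infty}|\mathrm{Rm}|(x)=0$ from the weighted bound (\ref{f1}) with $a=\frac{p}{4}$ via a local Moser iteration, upgrade to quadratic decay $|\mathrm{Rm}|\leq C f^{-1}$ by the barrier argument of \cite{MW}, and conclude with Shi's estimates and \cite{KW}. The only (harmless) difference is technical: you tame the drift by rescaling to balls of radius $\sim f^{-1/2}$, whereas the paper absorbs $\left\langle \nabla f,\nabla \left\vert \mathrm{Rm}\right\vert ^{2}\right\rangle$ into a potential $u=c\left( \left\vert \mathrm{Rm}\right\vert +f\right)$ using the good gradient term and runs Moser on unit balls, paying only a factor $\left( \int_{B_{x}\left( 1\right) }u^{n}+1\right) ^{1/p}$ that is beaten by taking $p\geq 8n$.
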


\begin{proof}
Applying (\ref{f1}) for $a=\frac{p}{4}$ implies that 
\begin{equation*}
\int_{B_{x}\left( 1\right) }\left\vert \mathrm{Rm}\right\vert ^{p}\leq
C\left( d\left( x_{0},x\right) +1\right) ^{-\frac{p}{2}}.
\end{equation*}%
Note that 
\begin{equation*}
\Delta \left\vert \mathrm{Rm}\right\vert ^{2}\geq -u\left\vert \mathrm{Rm}%
\right\vert ^{2},
\end{equation*}%
where $u:=c\left( \left\vert \mathrm{Rm}\right\vert +f\right) .$
Furthermore, the Sobolev constant of $B_{x}\left( 1\right)$ depends only on dimension, 
the Ricci curvature bound
and Perelman's invariant \cite{MWa}. So the Moser iteration \cite{L} implies that 
\begin{eqnarray*}
\left\vert \mathrm{Rm}\right\vert \left( x\right) &\leq &C\left(
\int_{B_{x}\left( 1\right) }u^{n}+1\right) ^{\frac{1}{p}}\left(
\int_{B_{x}\left( 1\right) }\left\vert \mathrm{Rm}\right\vert ^{p}\right) ^{%
\frac{1}{p}} \\
&\leq &C\left( d\left( x_{0},x\right) +1\right) ^{-\frac{1}{4}}.
\end{eqnarray*}%
In particular, this shows that $\lim_{x\to \infty}\left\vert \mathrm{Rm}\right\vert=0.$ 
Now from 
\begin{equation*}
\Delta _{f}\left\vert \mathrm{Rm}\right\vert \geq \left\vert \mathrm{Rm}%
\right\vert -c\left\vert \mathrm{Rm}\right\vert ^{2}
\end{equation*}%
and the information that $\left\vert \mathrm{Rm}\right\vert \rightarrow 0$
at infinity, it follows that $\left\vert \mathrm{Rm}\right\vert $ decays
quadratically, that is,
 
\begin{equation}
\left\vert \mathrm{Rm}\right\vert \left( x\right) \leq c\left( d\left(
x_{0},x\right) +1\right) ^{-2}.  \label{f2}
\end{equation}%
Indeed, it was shown in \cite{MW} that if a nonnegative function $w$
satisfies 
\begin{equation*}
\Delta _{f}w\geq w-cw^{2}
\end{equation*}%
on a shrinking Ricci soliton and $w\rightarrow 0$ at infinity, then there
exists a constant $c>0$ so that $w\leq \frac{c}{f}.$ The proof given in \cite%
{MW} is in dimension $n=4,$ but it is easy to see that the same argument
works in any dimension.

Now (\ref{f2}) and Shi's derivative estimates imply the derivatives of the
Riemann curvature tensor satisfy%
\begin{equation*}
|\nabla ^{k}\mathrm{Rm}|\left( x\right) \leq c\,\left( d\left(
x_{0},x\right) +1\right) ^{-k-2}
\end{equation*}%
for all $k\geq 1.$ From this, it follows that $\left( M,g\right) $ is $C^{k}$ asymptotic to
a cone for all $k.$ We refer to \cite{KW} for more details. The theorem is
proved.
\end{proof}

\end{document}